\newtheorem{lem}{Lemma}[section]
\newtheorem{thm}[lem]{Theorem}
\newtheorem{cor}[lem]{Corollary}
\newtheorem{fact}[lem]{Fact}
\newtheorem{rem}[lem]{Remark}
\newtheorem{defi}[lem]{Definition}
\newcommand{\demphmath}[1]{#1}
\DeclareMathOperator*{\Aut}{Aut}
\DeclareMathOperator*{\fin}{fin}
\DeclareMathOperator*{\dcl}{dcl}
\DeclareMathOperator*{\acl}{acl}
\DeclareMathOperator*{\Isom}{Isom}
\DeclareMathOperator*{\im}{im}
\DeclareMathOperator*{\tp}{tp}
\newcommand{\U}{\mathbb{U}}
\newcommand{\N}{\mathbb{N}}
\newcommand{\Z}{\mathbb{Z}}
\newcommand{\F}{\mathbb{F}}
\def\Ind#1#2{#1\setbox0=\hbox{$#1x$}\kern\wd0\hbox to 0pt{\hss$#1\mid$\hss}
\lower.9\ht0\hbox to 0pt{\hss$#1\smile$\hss}\kern\wd0}
\def\Notind#1#2{#1\setbox0=\hbox{$#1x$}\kern\wd0\hbox to
0pt{\mathchardef\nn="0236\hss$#1\nn$\kern1.4\wd0\hss}\hbox 
to 0pt{\hss$#1\mid$\hss}\lower.9\ht0
\hbox to 0pt{\hss$#1\smile$\hss}\kern\wd0}
\def\ind{\mathop{\mathpalette\Ind{}}}
\newcommand{\demph}[1]{\emph{#1}}
 \newcommand{\amalgam}{\otimes}
\newcommand{\amalgamo}[1]{*_{#1}}
 \newcommand{\samalgamo}[3]{\langle #1#2\rangle\amalgam_{#2}\langle #3#2\rangle}
\newcommand{\struc}[1]{\langle #1\rangle}
\newcommand{\kat}{Kat\v{e}tov\xspace}
\newcommand{\SIR}{SI-structure\xspace}
\newcommand{\SIRs}{SI-structures\xspace}
\newcommand{\Fraisse}{Fra\"issé\xspace}
\DeclareMathOperator*{\Fr}{\mathrm{Fr}}
\newcommand{\Komega}{\mathcal{K}_{\omega}}
\newtheorem*{thmnonumber}{Theorem}
\title{\Fraisse Structures with Universal Automorphism Groups}
\author{Isabel Müller}
\begin{document}

 \maketitle
\begin{abstract}
We prove that the automorphism group of a \Fraisse structure $M$ equipped with
a notion of stationary
independence is universal for the class of automorphism groups of substructures
of $M$. Furthermore, we show that this applies to certain homogeneous
$n$-gons. 
\end{abstract}

\section{Introduction}
Certain homogeneous structures are universal with respect to the class of
their substructures: The Rado graph is
universal for the class of all countable graphs, the rationals as a dense linear
order for the class of all countable linear orders and Urysohn's
universal Polish
space for the class of all Polish spaces. Jaligot asked whether a
universal structure $M$ transfers its universality onto its automorphism
group, i.e. whether $\Aut(M)$ is universal for the
class of automorphism groups of substructures of $M$ (cf. \cite{Jaligot2007}).
Recently, Doucha showed that, for an uncountable structure $M$, the answer 
to Jaligot's question is rarely positive (\cite{Doucha}). Countable homogeneous 
structures on the contrary, most often have universal automorphism 
groups. In fact, the only known counter example was pointed out by 
Piotr Kowalski and is given by the \Fraisse limit of finite fields in fixed 
characteristic $p$, which coincides with the algebraic closure of 
$\F_p$. Its automorphism group is $\hat{\Z}$, which 
is torsion free and hence does not embed any automorphism group of a finite 
field. It is still unknown if there is a relational countable counterexample.

We will prove that in the case
where M is a \Fraisse structure admitting a certain stationary
independence relation, the automorphism group $\Aut(M)$ will be universal for 
the class of automorphism groups of substructures of $M$. 

Uspenskij \cite{Uspenskij2001}, using a careful
construction of Urysohn's universal Polish space given by \kat
\cite{Katetov1988}, proved that its isometry
group is universal for the class of all Polish groups, which corresponds
to the class of isometry groups of Polish spaces \cite{Gao2003}.
The idea of
\kat thereby can be described as follows: Given a Polish
space $X$, he constructed a new metric space $E_1(X)$ consisting of $X$ together
with all possible 1-point metric extensions, while assigning the smallest
possible distance between new points. Under minor
restrictions, the space obtained is again Polish, denoted by the
first \kat space of $X$. Iterating this, i.e. building one \kat space
over the other, he constructed a copy of Urysohn's space
itself. Furthermore, all isometries of $X$ extend in a unique way at every step
of the construction, which yields the desired embedding of $\Isom(X)$ into
$\Isom(\U)$. 

In \cite{Bilge2012}
Bilge adapted this
construction to \Fraisse limits of rational structures with free
amalgamation by gluing extensions freely over the given space. Both Urysohn's 
spaces and \Fraisse classes with free amalgamation carry an
independence relation as introduced by Tent and Ziegler
\cite{Tent2012}.
In this paper, we will show that the mere presence of a stationary
independence relation within a \Fraisse structure $M$ allows us to
mimic \kat's construction of Urysohn's universal metric space, starting with any
structure $X$ embeddable in $M$. With the help of the given independence
relation, we will
glue "small" extensions of $X$ independently
and construct
an analog of \kat spaces in the non-metric setting, thereby ensuring
that
the automorphisms of $X$ extend canonically to its \kat spaces and that the
extensions behave well under composition. In particular, we will give a
positive answer to the question of
Jaligot for the class
of \Fraisse limits with stationary independence relation by proving the
following result (Theorem \ref{mainthm}):

\begin{thmnonumber}
Let $M$ be a \Fraisse structure with stationary independence relation and
$\Komega$ the class of all countable structures
embeddable into
$M$. Then the automorphism group $\Aut(M)$ is universal for the class
$\Aut(\Komega):=\{\Aut(X)\mid X\in\Komega\}$, i.e. every group in
$\Aut(\Komega)$ can be continuously embedded as a subgroup into $\Aut(M)$.  
\end{thmnonumber}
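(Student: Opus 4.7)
The plan is to adapt \kat's construction of Urysohn's space to the \Fraisse setting, with the stationary independence relation (SIR) playing the role of metric amalgamation. Given $X\in\Komega$, I will build a chain $X=X_0\subseteq X_1\subseteq\cdots$ inside $\Komega$ whose union $X_\omega$ is isomorphic to $M$, and arrange for every $\sigma\in\Aut(X)$ to lift canonically along the chain to an automorphism of $X_\omega$.

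For a single step $X\mapsto E(X)$, I proceed as follows. For each pair $(A,p)$ with $A\finset X$ and $p$ a quantifier-free type over $A$ that is realised in $M$, introduce a fresh tuple $\s b_{A,p}$ of the correct type over $A$, and stipulate that $\s b_{A,p}\ind_A X$. Stationarity of $\ind$ ensures that these declarations are jointly consistent: the type of any finite union of new tuples over $X$ is pinned down by their types over the respective $A$'s together with the independence statements, so they amalgamate into a single countable structure $E(X)\supseteq X$; verifying $E(X)\in\Komega$ reduces to embedding each finite substructure into $M$, which follows by iterated use of the amalgamation inherent to $\ind$. For $\sigma\in\Aut(X)$ I then define $E(\sigma)$ to equal $\sigma$ on $X$ and to send $\s b_{A,p}$ to $\s b_{\sigma A,\sigma_* p}$. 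Stationarity forces $E(\sigma)$ to preserve types of arbitrary finite tuples, so it is an automorphism; the same uniqueness yields $E(\sigma\tau)=E(\sigma)E(\tau)$ and $E(\mathrm{id})=\mathrm{id}$. Since $E(\sigma)(\s b_{A,p})$ depends on $\sigma$ only through $\sigma|_A$, the map $\sigma\mapsto E(\sigma)$ is continuous for the pointwise-convergence topology.

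Setting $X_{n+1}:=E(X_n)$ and $X_\omega:=\bigcup_n X_n$, every type over a finite subset of $X_\omega$ that is realisable in $M$ is already realised at some finite stage. Hence $X_\omega$ is a countable ultrahomogeneous structure with the age of $M$, so by \Fraisse's theorem $X_\omega\cong M$. Composing the lifts gives the desired injective continuous group homomorphism $\Aut(X)\hookrightarrow\Aut(X_\omega)\cong\Aut(M)$.

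The principal obstacle lies in the construction of $E(X)$ together with the lift $\sigma\mapsto E(\sigma)$: arranging simultaneous independent gluing of all one-point extensions into a legitimate member of $\Komega$, and checking that the naturally defined lift is actually an automorphism which respects composition. Both rely on stationarity of $\ind$, which in the metric setting of \kat was built into the definition of a \kat function but here must be extracted carefully from the axioms of the SIR; and both invite subtle coherence issues when several tuples $\s b_{A,p}$, $\s b_{A',p'}$ sitting over overlapping bases are amalgamated at once, which I expect to handle via the transitivity and symmetry clauses of the SIR.
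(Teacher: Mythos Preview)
Your proposal is correct and follows essentially the same route as the paper: build an analogue of the \kat spaces by independently amalgamating all finitely supported one-point extensions of $X$, lift automorphisms by permuting these extensions, iterate, and identify the limit with $M$ via \Fraisse's theorem. The only cosmetic difference is that you index the new tuples by pairs $(A,p)$ rather than by the finitely supported types over $X$ they determine (as the paper does), which introduces harmless repetition but makes your continuity argument slightly more direct than the paper's, which has to invoke Stationarity once more at that stage.
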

Note, that every automorphism group of a countable first order structure $M$ 
can be considered as a Polish group if we equip it with the topology of 
pointwise convergence. The basic open sets for that topology
\begin{equation*}
 \mathcal{O}_u:=\{f\in\Aut(M)\mid f|_A=u\}
\end{equation*}
 
are determined by finite partial isomorphisms $u:A\rightarrow M$, where 
$A\subseteq M$ is a finite subset of $M$.\\

\textbf{Acknowledgements.} I want to thank Amador Martin-Pizarro for guiding me
into the topic and taking so much time to listen to first ideas and giving many
helpful comments all along the way. I also want to thank Andreas Baudisch for
fruitful
discussions during my diploma thesis and Katrin Tent for suggesting the
homogeneous generalized $n$-gons as a possible example and for comments about an
earlier version of the paper. 
\section{Preliminaries}
Let us briefly recall the central concepts of \Fraisse theory used in the
article.  For further reading and proofs in this topic, there is a plethora of
sources, see for example \cite[p.
158ff.]{Hodges} or \cite[p. 69ff.]{Tent2012a}.  

Let $L$ be a countable language and $\mathcal{K}$ a class
of finitely
 generated $L$-structures which is countable up to isomorphism
types. We call $\mathcal{K}$ a \demph{\Fraisse class} if the following
three conditions are satisfied:
\begin{itemize}
 \item [HP] For any finitely generated $L$-structure $A$ which is embeddable
into some $B\in \mathcal{K}$, there is a structure $A'$ in $\mathcal{K}$
isomorphic to
$A$.
\item [JEP] For every $B$ and $C$ in $\mathcal{K}$, there is some
$D\in\mathcal{K}$ such that both $B$ and $C$ are embeddable into $D$. 
\item [AP] For every $A,B$ and $C$ in $\mathcal{K}$ together with embeddings 
$f_1:A\rightarrow B$ and $f_2:A\rightarrow C$, there are some $D$ in
$\mathcal{K}$ and embeddings $g_1:B\rightarrow D$ and $g_2:C\rightarrow D$ such
that the following diagram commutes:
$$
\begin{xy}
 \xymatrixcolsep{3pc}\xymatrixrowsep{1pc}
 \xymatrix{
& B \ar@{-->}^{g_1}[rd]& \\
A \ar@{->}^{f_1}[ru] \ar@{->}_{f_2}[rd]& & D.\\
& C \ar@{-->}_{g_2}[ru] &\\
 }
\end{xy}
$$
\end{itemize}

We call the class of all finitely generated substructures of an $L$-structure 
$M$ the \demph{skeleton} of $M$. An $L$-structure $M$
is called \demph{rich} with respect to a class $\mathcal{K}$ of
finitely generated $L$-structures
if, for all
$A$ and $B$ in $\mathcal{K}$ together with embeddings $f:A\rightarrow B$ and
$g:A\rightarrow M$, there is an embedding $h:B\rightarrow M$ such that 
$h\circ f=g$.
Finally, an $L$-structure is
\demphmath{$\mathcal{K}$}\demph{-saturated} if its skeleton is exactly
$\mathcal{K}$ and it is furthermore rich with respect to $\mathcal{K}$.

The following fact is the main theorem of \Fraisse theory:
\begin{fact}\label{Fraissethm}
Assume $\mathcal{K}$ to be a class of finitely generated $L$-structures,
countable up to isomorphism types. Then
there is a countable $\mathcal{K}$-saturated structure $M$  if and only if
$\mathcal{K}$ is a \Fraisse class. Furthermore, any two
countable $\mathcal{K}$-saturated structures are isomorphic.
\end{fact}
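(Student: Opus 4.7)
The plan is to prove the three assertions in turn: (i) if a countable $\mathcal{K}$-saturated structure $M$ exists, then $\mathcal{K}$ satisfies HP, JEP and AP; (ii) conversely, every \Fraisse class admits such an $M$; (iii) any two countable $\mathcal{K}$-saturated structures are isomorphic.

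For (i), the fact that the skeleton of $M$ equals $\mathcal{K}$ does most of the work. HP follows since, given $A$ embeddable into some $B\in\mathcal{K}$, an embedding of $B$ into $M$ realises a copy of $A$ as a finitely generated substructure of $M$, hence as an element of $\mathcal{K}$. For JEP, embed $B,C\in\mathcal{K}$ separately into $M$; the finitely generated substructure of $M$ containing both images belongs to $\mathcal{K}$ and serves as $D$. For AP, given $f_1\colon A\to B$ and $f_2\colon A\to C$ in $\mathcal{K}$, fix any embedding $j\colon B\to M$ and apply richness of $M$ to the embeddings $f_2\colon A\to C$ and $j\circ f_1\colon A\to M$ to obtain $h\colon C\to M$ with $h\circ f_2=j\circ f_1$; the finitely generated substructure of $M$ containing the images of both $j$ and $h$ is the desired $D$.

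For (ii), the plan is a standard chain construction. Enumerate $\mathcal{K}$ up to isomorphism as $(K_n)_{n\in\mathbb{N}}$ and build an increasing chain $M_0\subseteq M_1\subseteq\cdots$ of structures in $\mathcal{K}$. At each step we must satisfy two types of tasks simultaneously via dovetailing: (a) a JEP-task, forcing some $M_i$ to contain an isomorphic copy of $K_n$, handled by amalgamating $M_i$ with $K_n$ over the empty substructure; and (b) a richness-task, promising that for any pair of embeddings $f\colon A\to M_i$ and $g\colon A\to B$ (with $A,B\in\mathcal{K}$), some later $M_j$ will admit an embedding $h\colon B\to M_j$ with $h\circ g=f$, which is exactly the AP diagram completed inside $\mathcal{K}$. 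Since there are only countably many such tasks (up to isomorphism the pairs $(A,B)$ form a countable family, and each $M_i$ has only finitely many finitely generated substructures to use as sources), a priority queue ensures every task is eventually addressed. The union $M=\bigcup_i M_i$ is then countable, has skeleton $\mathcal{K}$ by HP and the construction, and is rich by (b).

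For (iii), apply the back-and-forth method: given two countable $\mathcal{K}$-saturated structures $M$ and $N$ with enumerations $(m_i)$ and $(n_i)$, construct a chain of finite partial isomorphisms $\varphi_0\subseteq\varphi_1\subseteq\cdots$ whose union is an isomorphism $M\to N$. At odd steps, extend the domain to include the next $m_i$: let $A=\langle\dom\varphi_k\rangle$, let $B=\langle\dom\varphi_k\cup\{m_i\}\rangle\in\mathcal{K}$, and use richness of $N$ applied to the inclusion $A\hookrightarrow B$ and to $\varphi_k\colon A\to N$ to extend $\varphi_k$ over $B$. Even steps cover the range symmetrically, so the union is a bijection and hence an isomorphism. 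The main delicate point in the whole argument lies in (ii), namely the bookkeeping required to guarantee that no richness obligation is postponed forever; everything else reduces either to AP/JEP inside $\mathcal{K}$ or to transparent back-and-forth.
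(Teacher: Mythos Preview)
The paper does not give its own proof of this statement: it is recorded as a \emph{Fact} with references to Hodges and Tent--Ziegler, so there is no in-paper argument to compare against. Your proposal is the standard textbook proof and is essentially correct.

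One inaccuracy worth flagging: in part~(ii) you justify the countability of the richness tasks by saying that ``each $M_i$ has only finitely many finitely generated substructures to use as sources''. This is false in general --- a single finitely generated structure can have infinitely many finitely generated substructures (think of $\mathbb{Z}$ as a group). The correct count is that each $M_i$ is countable, so there are only countably many finite tuples in $M_i$, hence countably many finitely generated substructures $A\subseteq M_i$; combined with the countability of $\mathcal{K}$ up to isomorphism and the countability of embeddings between countable structures, the task list is countable. The bookkeeping then goes through as you describe. A second, more minor point: in the JEP step of (ii) you speak of ``amalgamating $M_i$ with $K_n$ over the empty substructure'', but the empty structure need not lie in $\mathcal{K}$ (e.g.\ if $L$ has constants); it is cleaner simply to invoke JEP directly to obtain some $D\in\mathcal{K}$ embedding both $M_i$ and $K_n$.
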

Given a \Fraisse-class $\mathcal{K}$, the corresponding $\mathcal{K}$-saturated
structure as above is called the \demph{\Fraisse limit}
of $\mathcal{K}$. Note that a countable
structure $M$ is the \Fraisse limit
of its
skeleton if and only if $M$ is \demph{homogeneous}, i.e. every partial
isomorphism between finitely generated substructures can be extended to an
automorphism of $M$. We call such structures \demph{\Fraisse structures}.

\section{Stationary Independence}
The main ingredient to generalize Kat\v{e}tov's construction to arbitrary
\Fraisse structures is the presence of a
stationary independence relation. For the following
$a,b,\dots$ denote finite tuples, by  
$A,B,C\dots$ we denote small, i.e. finitely generated structures,
whereas $X,Y,\dots$ stand for arbitrary countable ones. Given substructures
$A$ and $B$, the substructure generated by their union is denoted with
$\langle
AB\rangle$. By a type over $X$, we mean a set of $L(X)$ formulas $p(x)$ with
free variables $x$ which is maximal satisfiable in $M$. The type $\tp(a/X)$ of
a tuple $a$ over $X$ is the set of all $L(X)$ formulas which are satisfied by
$a$. 
\begin{defi}[(Local) Stationary Independence Relation]
\label{SIR}
Assume $M$ to be a homogeneous $L$-structure.
A ternary relation $\ind$ on the finitely generated substructures of $M$ is
called a \demph{stationary independence relation} (\demph{SIR}) if the
following
conditions are satisfied:
\begin{enumerate}[label=SIR\arabic*]
 \item\label{invarianz} \upshape{(Invariance)}. The independence of finitely
generated substructures in $M$ only depends on their type. In particular, for 
any
automorphism $f$ of $M$, we have $A\ind_{C}B$ if and only if
$f(A)\ind_{f(C)}f(B)$.
 \item\label{symmetrie}\upshape{(Symmetry)}. If $A\ind_CB$, then $B\ind_CA$.
\item\label{monotonie} \upshape{(Monotonicity)}. If $A\ind_{C}\struc{BD}$, then
$A\ind_{C}B$ and $A\ind_{\struc{BC}}D$.
\item\upshape{(Existence)}. \label{existenz} For any $A,B$ and $C$ in $M$, there
is some
$A'\models \tp(A/C)$ with $A'\ind_{C}B$. 
\item\upshape{(Stationarity)}. \label{statio} If $A$ and $A'$ have the same
type over $C$ and are both independent over $C$ from some set $B$, then they
also have the same type over $\struc{BC}$. 
\end{enumerate}
If the relation $A\ind_{C}B$ is only defined for nonempty $C$, we
call
$\ind$ a \demph{local} stationary independence relation.\hfill$\dashv$
\end{defi}
\begin{rem}
 Any SIR also fulfills the following
property, which was part of the original definition in \cite{Tent2012}:
\begin{enumerate}
\item[SIR6]\label{transitivitat} \upshape{(Transitivity)}. If $A\ind_{C}B$ and
$A\ind_{\struc{BC}}D$, then $A\ind_{C}\struc{BD}$.
\end{enumerate}
 To see that, consider
$A,B,C$ and $D$ with $A\ind_{C}B$ and $A\ind_{\struc{BC}}D$. We have to show 
that this
implies the
independence of $A$ and
$\struc{BD}$ over $C$. By Existence there is some $A'\equiv_C A$ with
$A'\ind_{C}\struc{BD}$.
By Monotonicity and Stationarity we get $A'\equiv_{BC}A$. Again by Monotonicity 
and Stationarity, we obtain $A\equiv_{BCD}A'$ and
hence
$A\ind_{C}\struc{BD}$, as desired. \hfill$\dashv$
\end{rem}

 For a (local) SIR $\ind$ defined on some
homogeneous structure $M$, we call the pair $(M,\ind)$ a \demph{(local)
SI-structure}.
If the interpretation of $\ind$ in $M$ is  clear or irrelevant, we will refer to
$M$ alone as an \SIR.

\begin{rem}
 If $A$ and $A'$ in $M$ have the same quantifier free (qf-)type over some
$B\subseteq M$, then the map $AB\mapsto A'B$ is a partial
isomorphism. In homogeneous structures such
a map extends to
an automorphism of the whole structure $M$, fixing $B$ and sending $A$ to
$A'$. As we will exclusively work inside homogeneous structures for the rest
of the
article, note that $A$ and $A'$ have the same qf-type over
$B$
(denoted by $\tp^{qf}(A/B)=\tp^{qf}(A'/B)$) if and only if there is an
automorphism
of $M$ that fixes $B$ pointwise and maps $A$ to $A'$ (write $A\equiv_B A'$). 
\end{rem}
A necessary condition for a given structure to carry a
stationary
independence relation is given by the following fact (cf. \cite{Tent2012},
Proof of Lemma 5.1).
\begin{fact}
\label{disjunktamalgam}
 Algebraic and definable closure coincide in an \SIR $M$, i.e.
$\acl(X)=\dcl(X)$ for all $X\subset M$. 
\end{fact}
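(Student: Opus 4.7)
The plan is to establish the non-trivial inclusion $\acl(X)\subseteq\dcl(X)$. Since both closures are finitary, it suffices to treat a finitely generated parameter set $C$ and a single element $a$. I would argue by contradiction: assume $a\in\acl(C)\setminus\dcl(C)$ and enumerate the finitely many realizations of $\tp(a/C)$ in $M$ as $a=a_1,\dots,a_n$ with $n\geq 2$.

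The decisive move is a careful application of Existence (\ref{existenz}): choose $b\models\tp(a/C)$ with $b\ind_C\struc{a_1\ldots a_n}$. By Invariance (\ref{invarianz}) one has $b\in\acl(C)$, so $b=a_i$ for some index $i$. Monotonicity (\ref{monotonie}) then extracts from $a_i\ind_C\struc{a_1\ldots a_n}$ the whole family of relations $a_i\ind_C a_j$ for $j\in\{1,\dots,n\}$; crucially, taking $j=i$ gives the self-independence $a_i\ind_C a_i$.

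With this in hand, I would pick any $j\neq i$ (possible since $n\geq 2$) and apply Stationarity (\ref{statio}) with parameter $a_i$: the elements $a_i$ and $a_j$ are conjugates of $a$ over $C$, hence $a_i\equiv_C a_j$, and by Symmetry (\ref{symmetrie}) both are independent from $a_i$ over $C$. Stationarity then upgrades this to $a_i\equiv_{\struc{Ca_i}}a_j$. But the formula $x=a_i$ lies in $\tp(a_i/\struc{Ca_i})$, so it is realized by $a_j$ as well, forcing $a_j=a_i$ and contradicting $j\neq i$.

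The only subtle point, and the main obstacle to finding the argument, is the choice of parameter set in Existence: one must demand independence over the \emph{entire} finite set of $C$-conjugates, not merely over $a$ or $\struc{aa'}$. It is precisely this choice that pigeonholes the freshly produced $b$ back into the conjugacy class, and that in turn is what unlocks the self-independence $a_i\ind_C a_i$ on which Stationarity can then fasten.
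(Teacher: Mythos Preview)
Your argument is correct. Note, however, that the paper does not supply its own proof of this fact; it simply refers the reader to the proof of Lemma~5.1 in \cite{Tent2012}. Your line of reasoning is the standard one and matches what one finds there: use Existence to produce a $C$-conjugate of $a$ independent from the entire finite $C$-orbit, observe that this conjugate must fall back into the orbit, extract self-independence via Monotonicity, and then invoke Stationarity to force two distinct conjugates to coincide.

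One small quibble: the step ``$b\in\acl(C)$'' is not really an application of \ref{invarianz}, which concerns invariance of the relation $\ind$ under automorphisms. What you are using is just that $b\models\tp(a/C)$ forces $b$ to satisfy the same algebraic formula over $C$ that witnesses $a\in\acl(C)$; equivalently, in a homogeneous structure $\acl(C)$ is $\Aut(M/C)$-invariant. This is harmless, but worth labelling accurately.
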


Note that Fact \ref{disjunktamalgam} also holds in local
\SIRs for nonempty $X$. Furthermore, the
equality $\acl(\emptyset)=\dcl(\emptyset)$ is true if and only if either
$\acl(\emptyset)=\emptyset$ or every automorphism has a fixed point.

\hspace{1pt}

This characterization of algebraic closures in \SIRs already implies that we 
cannot define a stationary independence relation on every \Fraisse structure: 
 The class of finite 
fields in a 
fixed characteristic $p$ forms a \Fraisse class. In its limit, the 
algebraically closed field $\bar{\F}_p$ of characteristic $p$, algebraic and 
definable closure differ. Thus Lemma \ref{disjunktamalgam}
states that no stationary independence relation can be defined on
$\bar{\F}_p$. As mentioned before, its automorphism group is the torsionfree 
group $\hat{\Z}$, which does not embed any finite group. Thus, the group 
$\Aut(\bar{\F}_p)$ is not universal for the class of automorphism groups of 
substructures of $\bar{\F}_p$. 

On the other hand, the rationals as a dense linear order form another example 
of a \Fraisse structure which does not allow a notion of stationary 
independence, 
but still has a universal automorphism group. These two examples show that the 
absence of a notion of stationary independence within a \Fraisse structure 
does not decide about the universality of its automorphism group.

Nevertheless, several examples of \Fraisse structures admitting
a stationary independence relation are known. Amongst them are the 
rational Urysohn space and -sphere as well as \Fraisse limits of
rational free amalgamation classes \cite{Bilge2012}.
More examples, also including a non-relational \SIR, will be discussed in
detail in section \ref{sec:ngon}. \\

Unlike forking in simple theories, which is 
uniquely determined by its properties, a \Fraisse structure can carry 
different notions of stationary independence: For an example, consider the 
random graph, and define finite subgraphs $A$ and $B$ to be independent 
over 
some finite subgraph $C$ if and only if $A\cap B\subseteq C$ and every vertex 
in $A\setminus C$ is connected to every vertex in $B\setminus C$. It is not 
hard to verify that this defines a stationary independence relation. On 
the other hand, the class of finite graphs is a free amalgamation class, 
whence another
stationary independence relation is given by the free amalgam of $A$ and $B$ 
over $C$, i.e. $A$ and $B$ are defined to be independent over $C$ if and only if 
$A\cap B\subseteq C$ and no vertex 
in $A\setminus C$ is connected to a vertex in $B\setminus C$.\\

We have to develop some tools to mimic Kat\v{e}tov's construction of 
Urysohn's space. In order to merge certain small extensions over any embeddable 
infinite substructure in an independent way,
we will need to extend the independence notion to arbitrary 
base sets: 
\begin{defi}\label{InfInd}
 Let $M$ be an \SIR. Two substructures $A$ and $B$ are \demph{independent} over 
$X\subseteq M$ (write
$A\ind_{X}B$), if and only if there
is some
finitely generated $C\subset X$ such that $A\ind_{C'}B$ for every finitely
generated $C'\subset X$ containing $C$. 
\end{defi}

Notice that in the examples of \SIRs mentioned above, the independence relation 
is
naturally defined between arbitrary sets and coincides with the one given in the
previous definition. 

\begin{lem}
 The independence relation $\ind$ extended to arbitrary base sets
satisfies all the properties of an SIR except possibly Existence. 
\end{lem}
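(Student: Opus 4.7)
The plan is to verify, one at a time, each of Invariance (SIR1), Symmetry (SIR2), Monotonicity (SIR3) and Stationarity (SIR5) for the extended relation, reducing every statement to the corresponding property of the original relation evaluated at a finitely generated base.

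\textbf{Invariance and Symmetry.} An automorphism $f$ of $M$ restricts to a bijection between the finitely generated substructures of $X$ and those of $f(X)$ that preserves containment. So if $C$ witnesses $A \ind_X B$, then $f(C)$ witnesses $f(A) \ind_{f(X)} f(B)$ by applying SIR1 at each $C' \supseteq C$. Symmetry transports through the defining condition by SIR2 at each such $C'$.

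\textbf{Monotonicity.} The easy half $A \ind_X \struc{BD} \Rightarrow A \ind_X B$ uses the same witness $C \subseteq X$: for every finitely generated $C' \supseteq C$ in $X$, original SIR3 converts $A \ind_{C'} \struc{BD}$ into $A \ind_{C'} B$. For the second half $A \ind_{\struc{BX}} D$, I propose $\struc{BC}$ as witness inside $\struc{BX}$. Any finitely generated $E'$ with $\struc{BC} \subseteq E' \subseteq \struc{BX}$ sits inside some $\struc{BC^*}$ for a finitely generated $C^* \subseteq X$ with $C \subseteq C^*$; the hypothesis yields $A \ind_{C^*} \struc{BD}$, and thus $A \ind_{\struc{BC^*}} D$ by SIR3. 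The delicate point is then to descend from the base $\struc{BC^*}$ to the smaller base $E'$; I plan to do this by choosing a complementary $G \subseteq \struc{BC^*}$ with $\struc{BC^*} = \struc{E' G}$, producing $A \ind_{E'} G$ from a second SIR3-decomposition applied to the hypothesis, and then invoking Transitivity (justified in the Remark following the SIR definition) to recombine the two independences into $A \ind_{E'} D$.

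\textbf{Stationarity.} Assume $A \equiv_X A'$, $A \ind_X B$ and $A' \ind_X B$, with finitely generated witnesses $C$ and $C'$ respectively. To show $A \equiv_{\struc{BX}} A'$, it suffices to verify $\tp(A/\bar e) = \tp(A'/\bar e)$ for every finite tuple $\bar e$ from $\struc{BX}$. Such a tuple lies in $\struc{BC_0}$ for some finitely generated $C_0 \subseteq X$ containing both $C$ and $C'$. By choice of witnesses, $A \ind_{C_0} B$ and $A' \ind_{C_0} B$, and $A \equiv_{C_0} A'$ follows from $A \equiv_X A'$ by restriction. The original SIR5 then gives $A \equiv_{\struc{BC_0}} A'$, whence in particular $\tp(A/\bar e) = \tp(A'/\bar e)$.

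The principal obstacle is the base-descent step inside the second half of Monotonicity: securing $A \ind_{E'} G$ from the hypothesis requires a careful decomposition of $\struc{BC^*}$ into $E'$ together with a manageable complement, and this is where I expect the bulk of the technical work to lie; the other three properties should then be routine bookkeeping on finitely generated witnesses.
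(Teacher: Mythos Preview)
Your treatment of Invariance, Symmetry, the first half of Monotonicity, and Stationarity is correct and matches the paper (which simply asserts that Invariance and Monotonicity ``easily follow from the definition'' and gives essentially your argument for Stationarity).

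The gap is in your plan for the second half of Monotonicity. You want $A \ind_{E'} G$ for some complement $G$ with $\struc{E'G} = \struc{BC^*}$, obtained by ``a second SIR3-decomposition applied to the hypothesis''. But SIR3 applied to any instance $A \ind_{C'} \struc{BD}$ of the hypothesis only produces independences whose base is $C'$ or $\struc{C'B'}$ for some $C' \subseteq X$ and $B' \subseteq \struc{BD}$; it cannot manufacture an independence whose base is an arbitrary finitely generated $E' \subseteq \struc{BX}$. In a language with function symbols such $E'$ need not be of the form $\struc{BC'}$ for any $C' \subseteq X$ (an element of $\struc{BX}$ need not lie in $B \cup X$), so the descent from $\struc{BC^*}$ to $E'$ cannot be carried out as you sketch, and the subsequent appeal to Transitivity never gets off the ground. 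The paper does not spell this step out either, so you have correctly located the delicate point, but your proposed mechanism does not close it.

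There is also an omission relative to the paper: the paper gives a full direct proof of \emph{Transitivity} for the extended relation, which you do not address. Although Transitivity is listed only as a derived property (SIR6), its derivation in the Remark uses Existence, which may fail over infinite bases; a separate proof is therefore needed, and Transitivity is used repeatedly afterwards (for instance in establishing associativity of the SI-amalgam). The paper's argument is clean and avoids any base-descent: if $C_1$ witnesses $A \ind_X B$ and $\struc{C_2 B}$ witnesses $A \ind_{\struc{XB}} D$, set $C := \struc{C_1 C_2}$; then for every finitely generated $C' \supseteq C$ in $X$ one has $A \ind_{C'} B$ and $A \ind_{\struc{C'B}} D$, whence $A \ind_{C'} \struc{BD}$ by the original Transitivity. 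You should incorporate this.
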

\begin{proof}
Invariance and Monotonicity easily follow from the
definition. To see
Transitivity, assume $X\subseteq M$ and finitely generated $A,B$ and $D\subset
M$ given
with
\begin{equation*}
 A\ind_{X}B\text{ and }A\ind_{\struc{XB}}D.
\end{equation*}
By definition, there are $C_1$ and $C_2\subset X$ such that $C_1$
and $\struc{C_2B}$ are finite supports for the first and the second
independence respectively, i.e.
$A\ind_{C'}B$ (resp. $A\ind_{C'}D$) for every finitely
generated $C'\subset X$ (resp. $C'\subset \struc{XB}$) containing $C_1$ (resp.
$\struc{C_2B}$).
If we set $C:=\langle C_1C_2\rangle$, then every finitely generated $C'\subset
X$ containing $C$ satisfies 
\begin{equation*}
 A\ind_{C'}B\text{ and }A\ind_{\struc{C'B}}D\text{, hence
}A\ind_{C'}\struc{BD},
\end{equation*}
which yields Transitivity. 

To prove Stationarity, note that two different realizations of the same type
over $X$,
both independent from some finite set $B$, have different types over
$\struc{XB}$
if and only if there is some finite subset $C'$ of $X$ such that their types
differ already over $\struc{C'B}$. 
\end{proof}

The homogeneity of $M$ allows us furthermore to speak of independence
between subsets of 
embeddable structures.
We denote by \demphmath{$\Komega$} the class of all
structures embeddable into $M$, i.e. the class of all structures whose skeleton
is contained in the skeleton of $M$. For some $Y\in \Komega$
with
substructures $A,B$ and $X$, we
say that $A$ is independent from $B$ over $X$ if the same
is true for one, and hence for every embedding of $Y$ into $M$.

\section{A General \kat Construction}

Since the independence relation is defined only in one model and
need not be part of the theory, we cannot ensure the existence of
independent extensions for types over base sets which
are not necessarily finitely generated. Never\-theless, a variant of
Existence for certain types, called finitely supported, can be deduced.

\begin{defi} Let $(M,\ind)$ be an \SIR, and $\langle AX\rangle$ arbitrary 
in $\Komega$. We say that $\struc{AX}$ is \demph{finitely 
supported} (over $X$), if there is a finitely generated subset $C\subseteq X$ 
such that $A\ind_CD$ for all finitely generated $D\subseteq X$. In that case we 
also write $A\ind_CX$ and refer to $C$ as
a \demph{support} of $\struc{AX}$ over $X$. Furthermore, we call a quantifier 
free type $\pi(x)$ over $X$ \demph{finitely supported}, if it defines a 
finitely 
supported $\Komega$-structure. 
\end{defi}

Loosely speaking, a type $\pi(x)$ over $X$ is finitely supported if its
realizations are independent from the base set over some finitely generated
substructure $C$ of $X$. It is not hard to see that every finitely generated
$D\subseteq X$ that contains $C$ is again a
support for $\pi(x)$, so that for any finite family of finitely supported 
types over
$X$ we can choose a common support. 

Let us denote by \demphmath{$S^{qf}(X)$} the set of all quantifier-free
types
over $X$. In the
following, we show some useful properties of finitely supported
types and structures.

\begin{lem} Assume $X$ to be a $\Komega$-structure. 
 \begin{itemize}
 \item [i)] \label{extension} Suppose $C\subseteq X$ is finitely generated and
$\pi:=\pi(x)$ a
qf-type over $C$ realized in $M$. Then $\pi$ has a unique
extension
$\tilde{\pi}\in S^{qf}(X)$ which is finitely supported over $X$ with support 
$C$.
\item [ii)] Let $\struc{AX}$ and $\struc{BX}\in \Komega$ be finitely
supported
over
$X$. Then
there is some $\struc{A'B'X'}\in \Komega$ with 

$$\struc{A'X'}\cong \struc{AX},\; \struc{B'X'}\cong \struc{BX}
\text{ and } A'\ind_{X'}B'.$$ 

Furthermore, the structure $\struc{A'B'X'}$ is again finitely
supported over
$X$. 
\end{itemize}
\end{lem}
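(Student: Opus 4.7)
The plan for (i) is to build $\tilde{\pi}$ as a coherent family of qf-types over the finitely generated intermediate subsets of $X$ and then realize it. For every finitely generated $D$ with $C\subseteq D\subseteq X$, I would use Existence (SIR4) to pick a realization $a_D\models\pi$ in $M$ with $a_D\ind_C D$ and declare the $D$-restriction of $\tilde{\pi}$ to be $\tp^{qf}(a_D/D)$. Stationarity (SIR5) makes this restriction independent of the choice of $a_D$, and Monotonicity (SIR3) shows that for $D\subseteq D'$ the restriction over $D'$ extends the one over $D$, so the family is coherent and its union $\tilde{\pi}$ is a well-defined qf-type over $X$ with $C$ as a support. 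To realize $\tilde{\pi}$ in $M$ I would observe that every finitely generated substructure of the abstract structure $\struc{aX}$ (with $a$ formally satisfying $\tilde{\pi}$) embeds into some $\struc{a_D D}\subseteq M$, so that $\struc{aX}\in\Komega$ and hence embeds into $M$ by universality of the \Fraisse limit; homogeneity of $M$ then lets me transport this embedding so that $X$ sits inside $M$ in the prescribed way. Uniqueness of $\tilde{\pi}$ is immediate from Stationarity: any other $C$-supported extension $\sigma$ of $\pi$ has realizations $b$ with $b\ind_C D$ for each finite $D\subseteq X$, forcing $b\equiv_D a_D$ and hence $\sigma=\tilde{\pi}$.

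For (ii), I would first fix a common finitely generated support $C\subseteq X$ of both $\struc{AX}$ and $\struc{BX}$ (the join $\struc{C_A C_B}$ of the individual supports works via Monotonicity), and then apply (i) with the larger base structure $\struc{AX}\in\Komega$ and the qf-type $\pi_B:=\tp^{qf}(B/C)$. This yields a unique $C$-supported extension $\tilde{\pi}^B\in S^{qf}(\struc{AX})$, which I would realize in $M$ by a tuple $B'$, so that $B'\ind_C E$ holds for every finitely generated $E\subseteq\struc{AX}$. Restricting to $E\subseteq X$ and invoking the uniqueness from (i) applied to $X$ gives $\tp^{qf}(B'/X)=\tp^{qf}(B/X)$, hence $\struc{B'X}\cong\struc{BX}$ over $X$; setting $X':=X$ and $A':=A$ gives the desired $\struc{A'B'X'}=\struc{AB'X}$. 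For every finite $D\subseteq X$, the relation $B'\ind_C\struc{AD}$ yields via Monotonicity and Symmetry the independence $A\ind_{\struc{CD}}B'$, exhibiting $C$ as a support for $A\ind_X B'$; combining this with $A\ind_C D$ from the support of $\struc{AX}$ via the Transitivity noted in the Remark then gives $\struc{AB'}\ind_C D$, so that $\struc{AB'X}$ is itself finitely supported over $X$ with support $C$.

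The delicate step is producing an actual realization of $\tilde{\pi}$ in $M$ in part (i): the SIR axioms by themselves only yield realizations of each $\pi|_D$ separately and provide no a priori compactness, so this step must genuinely invoke the $\Komega$-saturation of $M$ as a \Fraisse limit. Once this is granted, the remaining verifications --- both uniqueness in (i) and all of (ii) --- are formal applications of Invariance, Monotonicity, Symmetry, Stationarity, and the Transitivity derived in the Remark.
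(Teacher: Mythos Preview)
Your proof is correct. For part (i), your argument via the directed system of all finitely generated $D$ with $C\subseteq D\subseteq X$ is essentially the paper's chain argument reorganized: both build $\tilde{\pi}$ piecewise using Existence and Stationarity, and both verify realizability by observing that every finitely generated piece of the abstract structure $\struc{aX}$ embeds into some $\struc{a_D D}\subseteq M$, so that $\struc{aX}\in\Komega$.

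For part (ii) the paper takes a slightly different route. Rather than enlarging the base to $\struc{AX}$ and extending $\tp^{qf}(B/C)$ over it, the paper first uses Existence inside $M$ to realize $A_1\equiv_C A$ and $B_1\equiv_C B$ with $A_1\ind_C B_1$, and then applies (i) once to extend the \emph{joint} type $\tp^{qf}(A_1B_1/C)$ to a $C$-supported type over $X$, yielding $\struc{A_2B_2X}$. The paper's route is symmetric in $A$ and $B$ and makes the check $A_2\ind_X B_2$ a one-liner (from $\struc{A_2B_2}\ind_C C'$ and $A_2\ind_C B_2$ via Monotonicity and Transitivity), whereas your asymmetric route needs a few extra Symmetry/Monotonicity steps to extract $A\ind_{\struc{CD}}B'$ and then $\struc{AB'}\ind_C D$. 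Conversely, your approach has the pleasant feature that the original copy of $\struc{AX}$ is left untouched and only $B$ is moved. Both arguments rest on exactly the same SIR axioms and the derived Transitivity, so the difference is purely organizational.
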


\begin{proof} By choosing an arbitrary embedding, we may assume $X$ to
be a substructure of $M$.
\begin{itemize}
\item [i)] Write $X$ as the limit of 
a chain $X=\bigcup_{n\in\omega}
C_n$ with $C_0:=C$. Inductively we can construct a chain of types
$\pi_0\subseteq \pi_1\subseteq \dots$ by setting $\pi_0:=\pi$ and for
each $n>0$, we set $\pi_n:=\tp^{qf}(A_n/C_n)$, where $A_n\subseteq M$ 
with $A_n\models \pi_{n-1}$ and
$A_n\ind_{C_{n-1}}C_n$. By
compactness and Transitivity, the set
$\tilde{\pi}:=\bigcup_{n\in\omega}\pi_n$ is a finitely supported type over $X$
with
support $C$. Note that $\tilde{\pi}$ defines again a $\Komega$-structure 
$\struc{AX}$, as every finite subset of $\struc{AX}$ is embeddable in some 
$\struc{A_nC_n}\subseteq M$. The uniqueness of $\tilde{\pi}$ now follows from 
Stationarity.
\item [ii)]  Let $C\subset X$ be some common support
of $\struc{AX}$ and $\struc{BX}$ over $X$. By
Existence we find realizations $A_1$ (resp. $B_1$) of
the qf-type of $A$ (resp. $B$) over $C$ in $M$ such that $A_1\ind_{C}B_1$. Part
i) allows us to extend the type $\tp^{qf}(A_1B_1/C)$ to some finitely supported
type
$\pi$ over $X$ with support $C$, which defines a
$\Komega$-structure $\struc{A_2B_2X}$. As $A_2$ (resp. $B_2$) and $A$ (resp.
$B$) have the same qf-type over $C$ and are both independent from $X$ over $C$,
Stationarity implies that $\struc{AX}\cong \struc{A_2X}$ (resp.
$\struc{BX}\cong \struc{B_2X}$). Furthermore, for any finitely generated
$C'\subset X$ containing $C$ we have
\begin{equation*}
 \struc{A_2B_2}\ind_{C}C'\text{ and }A_2\ind_{C}B_2.
\end{equation*}
So $A_2\ind_{C'}B_2$ by Monotonicity and Transitivity, and thus
$A_2\ind_{X}B_2$. That finishes the proof.
\end{itemize}
\end{proof}

The second part of the above lemma shows how to independently glue certain
structures over arbitrary $\Komega$-base sets. As we will see below, the
$\Komega$-structure described in Lemma \ref{extension}.ii)
is unique up to isomorphism. This justifies the following definition.

\begin{defi}
 Assume  $\struc{AX}$ and
$\struc{BX}$ to be finitely supported $\Komega$-structures. The
structure $\struc{A'B'X'}\in \Komega$ obtained in Lemma \ref{extension}.ii) is
called the \demph{SI-amalgam} of $\struc{AX}$ and $\struc{BX}$ over $X$ and
denoted by \demphmath{$A\amalgamo{X}B$}.
\end{defi}

Since an SI-amalgam is again a finitely supported
$\Komega$-structure, we
can amalgamate finite families of finitely supported
$\Komega$-structures. This process behaves well under
permutations of the given finite family.

\begin{lem}\label{lem39}
 The SI-amalgam of two structures is unique up to isomorphism. Moreover,
SI-amalgamation is 
commutative and associative, meaning that for
given finitely supported $\struc{AX},\struc{BX}$ and $\struc{CX}$, the
SI-amalgams $A\amalgamo{X}B$ and $B\amalgamo{X}A$
(resp. \linebreak$(A\amalgamo{X}B)\amalgamo{X}C$ and
$A\amalgamo{X}(B\amalgamo{X}C)$)
 are isomorphic. 
\end{lem}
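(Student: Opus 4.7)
My plan is to derive all three assertions from Lemma \ref{extension}, using Stationarity and Transitivity to pin down types over a common support.

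For uniqueness, suppose $\struc{A_1B_1X}$ and $\struc{A_2B_2X}$ both satisfy the conditions of Lemma \ref{extension}.ii) for $\struc{AX}$ and $\struc{BX}$. Pick a common support $C \subseteq X$ for both $\struc{AX}$ and $\struc{BX}$; by Monotonicity, $A_i\ind_C B_i$ for $i=1,2$, and $\tp^{qf}(A_i/C)=\tp^{qf}(A/C)$, $\tp^{qf}(B_i/C)=\tp^{qf}(B/C)$. Applying an automorphism fixing $C$ sending $A_2$ to $A_1$ (allowed by homogeneity and Invariance), we reduce to $A_1=A_2=A$; then Stationarity applied to $B_1,B_2$ over $C$ (both independent from $A$ over $C$ by Symmetry) gives $AB_1\equiv_C AB_2$. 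Thus $\tp^{qf}(A_iB_i/C)$ is the same finitely supported type over $X$ with support $C$, and Lemma \ref{extension}.i) forces $\struc{A_1B_1X}\cong \struc{A_2B_2X}$ over $X$.

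Commutativity is immediate from Symmetry of $\ind$: if $\struc{A'B'X'}$ realises the conditions with $A'\ind_{X'}B'$, then the very same structure also realises the conditions for $B\amalgamo{X}A$.

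For associativity, I pick a common support $C \subseteq X$ for $\struc{AX}, \struc{BX}$ and $\struc{CX}$ and show that in both iterated amalgams the triple $(A,B,C)$ has the same quantifier-free type over $C$, which by Lemma \ref{extension}.i) determines the extension to $X$ uniquely. In $(A\amalgamo{X}B)\amalgamo{X}C$, construction yields $A\ind_X B$ and $\struc{AB}\ind_X C$, so by Monotonicity $A\ind_C B$ and $\struc{AB}\ind_C C$. In $A\amalgamo{X}(B\amalgamo{X}C)$, we obtain $B\ind_C C$ and $A\ind_C\struc{BC}$; using Monotonicity this gives $A\ind_C B$ and $A\ind_{\struc{CB}}C$, and Transitivity combined with Symmetry then yields $\struc{AB}\ind_C C$. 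Thus in each case we have $A\ind_C B$ together with $\struc{AB}\ind_C C$, with the three base types $\tp^{qf}(A/C),\tp^{qf}(B/C),\tp^{qf}(C/C)$ prescribed. Two applications of the Stationarity argument from the uniqueness step (first to fix $\tp^{qf}(AB/C)$, then to fix $\tp^{qf}(ABC/C)$) show that this datum determines $\tp^{qf}(ABC/C)$, and hence by Lemma \ref{extension}.i) the whole finitely supported extension over $X$. The main subtlety will be keeping track of the iterated use of Symmetry and Transitivity when converting the nested independences coming out of the two different bracketings into the symmetric form $\struc{AB}\ind_C C$, but this is purely an application of the SIR axioms.
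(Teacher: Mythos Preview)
Your argument is correct in outline and, for uniqueness and commutativity, essentially the same as the paper's. Two points deserve comment.

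First, a notational clash: in the associativity paragraph you use $C$ both for the third structure $\struc{CX}$ from the statement and for the chosen finite support inside $X$, so that expressions like ``$\struc{AB}\ind_C C$'' are formally ambiguous. Rename the support (say $D$). Also, the passage from $A_i\ind_X B_i$ to $A_i\ind_D B_i$ is not ``Monotonicity'' but simply the definition of $\ind$ over infinite bases (Definition~\ref{InfInd}): you must choose $D$ large enough to witness all the relevant independences and to be a support for the entire iterated amalgam over $X$ (this last is guaranteed by the final clause of Lemma~\ref{extension}.ii), but you should say so).

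Second, for associativity the paper takes a shorter route than yours. Starting from a realisation $\struc{A_1B_1C_1X_1}$ of $A\amalgamo{X}(B\amalgamo{X}C)$, one has $A_1\ind_{X_1}\struc{B_1C_1}$ and $B_1\ind_{X_1}C_1$; a single application of Monotonicity and Transitivity (over $X_1$, not over a finite support) gives $A_1\ind_{X_1}B_1$ and $\struc{A_1B_1}\ind_{X_1}C_1$, so the \emph{same} structure already realises $(A\amalgamo{X}B)\amalgamo{X}C$, and the uniqueness clause just proved finishes the argument. Your approach---reducing both bracketings to the symmetric datum $A\ind_D B$, $\struc{AB}\ind_D C$ over a finite support and then applying Stationarity twice plus Lemma~\ref{extension}.i)---is valid, but it effectively reproves uniqueness for triples instead of invoking it.
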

\begin{proof}
Let two $\Komega$-structures $\struc{A_1B_1X_1}$ and $\struc{A_2B_2X_2}$ be
given with $A_iX_i\cong AX$ and $B_iX_i\cong BX$ as well as $A_i\ind_{X_i}B_i$. 
By Lemma
\ref{extension}.ii), the structures $\struc{A_iB_iX_i}$ are again finitely
supported with support $C_i\subseteq X_i$. Because $A_iX_i\cong AX$, we may 
assume that $A_iC_i\cong AC$ for some $C\subset X$. Note that
we can pick the $C_i$'s such that they also witness the independence of $A_i$ 
and $B_i$ over $X_i$, i.e.
$A_i\ind_{C'}B_i$ for all $C'\subset X_i$ with $C_i\subseteq C'$. By 
Stationarity and
Invariance, it suffices to show that $A_1B_1C_1\cong A_2B_2C_2$. Let us assume
that the structures are embedded into $M$. By homogeneity, the partial
isomorphism \texttt{$f:A_1C_1\rightarrow A_2C_2$} extends to
$\struc{A_1B_1C_1}$, yielding a copy $f(B_1):=B_2'$ of $B_1$. The structures 
$B_2$ and
$B_2'$ have the same type over $C_2$ and are both independent from $A_2$ over
it. Hence, there is an automorphism $g\in\Aut(M)$ that fixes $A_2C_2$ and sends
$B_2'$ to $B_2$. Finally, the map $g\circ f:\struc{A_1B_1C_1}\rightarrow
\struc{A_2B_2C_2}$ provides the desired isomorphism.

Commutativity follows directly from Symmetry of our
independence relation.
It remains to show that the amalgamation process is associative. To see that,
assume $\struc{A_1B_1C_1X_1}=A\amalgamo{X}(B\amalgamo{X}C)$. By definition of
the SI-amalgam, we have $A_1\ind_{X_1}B_1C_1$ and $B_1\ind_{X_1}C_1$, whence 
\begin{equation*}
 (1)\; A_1\ind_{X_1}B_1\text{ and }(2)\; A_1B_1\ind_{X_1}C_1,
\end{equation*}
by Monotonicity and Transitivity. Now $(1)$ yields
$\struc{A_1B_1X_1}=A\amalgamo{X}B$, whereas $(2)$ concludes that
\begin{equation*}
 \struc{A_1B_1C_1X_1}=(A\amalgamo{X}B)\amalgamo{X}C.
\end{equation*}
This implies $A\amalgamo{X}(B\amalgamo{X}C)\cong(A\amalgamo{X}B)\amalgamo{X}C$, 
as
desired.
\end{proof}

Lemma \ref{lem39} guarantees that the order in which we amalgamate a finite 
family
of structures is irrelevant. Hence, for a given finite family of
finitely supported $\Komega$-structures
$\{\struc{A_iX}, i\in n\}$, it makes sense to write: 
\begin{equation*}
 \underset{i\in n}{\amalgamo{X}}A_i:=((\dots
((A_0\amalgamo{X}A_1)\amalgamo{X}A_2)\dots) \amalgamo{X}A_{n-1}).
\end{equation*}

This amalgamation will be the main tool for developing a general analogue
of the
so-called \kat spaces in the non-metric setting. When we now move on to
countable families
$\mathcal{F}:=\{\struc{A_iX},i\in\omega\}$ of finitely supported structures over
$X$, note that every
finite amalgam $\underset{i\leq n}{\amalgamo{X}}A_i$ can naturally be embedded
into $\underset{i\leq n+1}{\amalgamo{X}}A_i$, so that the family
$(\underset{i\leq
n}{\amalgamo{X}}A_i)_{i\in\omega}$ is a directed system. The structure
 \demphmath{$\underset{i\in\omega}{\amalgamo{X}}A_i$} generated by the limit of
this system is still a $\Komega$-structure, called the \demph{SI-amalgam}
of $\{\struc{A_iX},i\in\omega\}$. As we are mainly interested in extensions of
automorphisms, the following lemma will be
useful further on.
\begin{lem} \label{lemamalgamationofisos}
 Let $\{\struc{A_iX},i\in\omega\}$ be a countable family of
$\Komega$-structures, finitely supported over $X$. Let furthermore 
$\{f_i:A_iX\rightarrow A_{\sigma(i)}X, i\in\omega\}$ be a family of
isomorphisms, where $\sigma$ is a
permutation of $\omega$ and ${f_i}|_{X}={f_j}|_{X}$ for all $i,j$. Then the
union $\bigcup_{i\in\omega}f_i$ induces an
automorphism of the SI-amalgam $\underset{i\in\omega}{\amalgamo{X}}A_i$.
\end{lem}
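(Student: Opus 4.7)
The plan is to construct the desired automorphism as an increasing union $\Phi = \bigcup_n \Phi_n$ of isomorphisms between the finite sub-amalgams $N_n := \underset{i\leq n}{\amalgamo{X}} A_i$ and $N_n^\sigma := \underset{i\leq n}{\amalgamo{X}} A_{\sigma(i)}$, both regarded as substructures of $N := \underset{i\in\omega}{\amalgamo{X}} A_i$. Starting with $\Phi_0 := f_0$, I build each $\Phi_n$ inductively so that $\Phi_n$ extends $\Phi_{n-1}$ and satisfies $\Phi_n|_{\struc{A_iX}} = f_i$ for every $i \leq n$; in particular, each $\Phi_n$ restricts to the common map $g := f_0|_X$ on $X$. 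Since $\sigma$ permutes $\omega$, both $\bigcup_n N_n$ and $\bigcup_n N_n^\sigma$ equal $N$, so $\Phi := \bigcup_n \Phi_n$ is an automorphism of $N$ extending every $f_i$.

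The heart of the induction is the lifting step at stage $n+1$. Associativity of the SI-amalgam (Lemma \ref{lem39}) gives $N_{n+1} = N_n \amalgamo{X} A_{n+1}$; since $\sigma$ is a permutation, $\sigma(n+1) \notin \sigma(\{0,\ldots,n\})$, and analogously $N_{n+1}^\sigma = N_n^\sigma \amalgamo{X} A_{\sigma(n+1)}$. I lift the pair $(\Phi_n, f_{n+1})$ (both restricting to $g$ on $X$) as follows. First, extend $\Phi_n$ to an automorphism $\tilde\Phi$ of the ambient $M$ via a standard back-and-forth argument using homogeneity (applicable since $N_n$ and $N_n^\sigma$ are countable substructures of $M$). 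Writing $A^* := \tilde\Phi(A_{n+1})$, Invariance of the extended SIR yields $A^* \ind_X N_n^\sigma$, while the composition $f_{n+1}\circ\tilde\Phi^{-1}: \struc{A^* X} \to \struc{A_{\sigma(n+1)} X}$ fixes $X$ pointwise, so that $A^* \equiv_X A_{\sigma(n+1)}$ along this specific iso. Since both $A^*$ and $A_{\sigma(n+1)}$ are independent from $N_n^\sigma$ over $X$, Stationarity extends this iso to one over $N_n^\sigma$: there is an automorphism $\psi$ of $M$ fixing $N_n^\sigma$ pointwise and satisfying $\psi|_{\struc{A^* X}} = f_{n+1}\circ\tilde\Phi^{-1}$. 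Setting $\Phi_{n+1} := (\psi\circ\tilde\Phi)|_{N_{n+1}}$ yields an iso $N_{n+1} \to N_{n+1}^\sigma$: it extends $\Phi_n$ on $N_n$ (since $\psi$ fixes $N_n^\sigma$) and extends $f_{n+1}$ on $\struc{A_{n+1}X}$ by the choice of $\psi$.

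The main obstacle is the precise form of Stationarity used in the lifting step: we need not merely that $A^*$ and $A_{\sigma(n+1)}$ realize the same qf-type over $N_n^\sigma$, but that the prescribed iso $f_{n+1}\circ\tilde\Phi^{-1}$ between them extends to fix $N_n^\sigma$ pointwise. This is implicit in the tuple-level reading of axiom \ref{statio} — identifying $A^*$ and $A_{\sigma(n+1)}$ as enumerations of the same type, Stationarity says the identification remains elementary over $\struc{BC}$. If one prefers, the strengthening can be derived explicitly from the stated axioms by extending $f_{n+1}\circ\tilde\Phi^{-1}$ to an automorphism of $M$ and then correcting by an application of Stationarity to $N_n^\sigma$ and its image, both independent from $A_{\sigma(n+1)}$ over $X$.
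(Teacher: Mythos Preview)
Your argument is correct and uses the same core idea as the paper: extend one factor to an automorphism of $M$, then use Invariance together with Stationarity to correct on the other factor. The difference is one of granularity. The paper checks that $f_0\cup f_1$ is a partial isomorphism by restricting to arbitrary finitely generated pieces $\struc{A_0A_1D}$ with $D\subset X$, so that only the finitely generated map $f_0|_{A_0D}$ needs to be extended and only the finitely-based axiom~\ref{statio} is invoked; your version instead extends the whole countable map $\Phi_n$ via back-and-forth (legitimate, since any finite restriction of $\Phi_n$ is a partial isomorphism and hence $\Phi_n$ is elementary in the $\omega$-homogeneous $M$) and then applies the infinite-base Stationarity established in the lemma following Definition~\ref{InfInd}. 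One small imprecision worth fixing: when you write $A^*\ind_X N_n^\sigma$, the object on the right should be the finitely generated $\struc{A_{\sigma(0)}\cdots A_{\sigma(n)}}$, since the paper extends $\ind$ only to infinite \emph{base} sets, not to infinite side sets; Stationarity over $X$ then still yields the same conclusion over $\struc{A_{\sigma(0)}\cdots A_{\sigma(n)}X}=N_n^\sigma$, so the argument goes through unchanged.
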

\begin{proof}
 We will establish the statement for the SI-amalgam of two structures. The claim
follows via induction. Hereby, surjectivity in the limit process is given by
the surjectivity of $\sigma$. As all of the structures are in
$\Komega$, we
may take $A_0\amalgamo{X}A_1$ and $A_{\sigma(0)}\amalgamo{X}A_{\sigma(1)}$ to be
substructures of $M$ and hence the $f_i$'s to be partial isomorphisms. It
suffices to show that $g:=f_1\cup f_2$ restricted to any finitely generated
substructure of $A_0\amalgamo{X}A_1$ is again a partial isomorphism. 

Choose $D\subset X$ arbitrary. As $g_{|A_0D}={f_0}_{|A_0D}$, the restriction
of
$g$ to $A_0D$ defines a partial isomorphism between finitely generated
substructures of $M$ and hence it extends to an automorphism
$\tilde{g}\in\Aut(M)$. Denote by $B_{\sigma(1)}$ the image of $A_1$ under
$\tilde{g}$. Then $A_{\sigma(1)}$ and $ B_{\sigma(1)}$ have the same type over
$D$, as $\tilde{g}_{|D}={f_0}_{|D}={f_1}_{|D}$, and are both independent from
$A_{\sigma(0)}$
over $D$ by Invariance.
Stationarity implies now that they even have the same type over
$DA_{\sigma(0)}$, whence there is some $h\in\Aut(M)$ fixing  $DA_{\sigma(0)}$
and sending $ B_{\sigma(1)}$ to $A_{\sigma(1)}$. Thus
$h\tilde{g}_{|A_0A_1D}=g_{|A_0A_1D}$ is a partial isomorphism and we are done.
\end{proof}

The above lemma yields a crucial property of SI-amalgams
needed to imitate Kat\v{e}tov's construction. We can now define a chain of
\kat
spaces in this setting. 

 Given an arbitrary $\Komega$-structure $X$, denote by
$S^{\fin}(X)$ the space of all finitely supported qf-types
over $X$. The set $S^{\fin}(X)$ gives
rise to a countable family $\mathcal{F}$ of finitely supported structures
and, after
fixing an arbitrary enumeration, we
can build the SI-amalgam of that family. 

\begin{defi}
 After choosing an arbitrary enumeration $S^{\fin}(X)=\{\struc{A_iX}\mid
i\in\omega\}$ of the space of qf-types over some
$\Komega$-structure $X$, 
let \demphmath{$E_1(X)$}$:=\underset{i\in\omega}{\amalgamo{X}}A_i$  be
the SI-amalgam of that family and call it the
\demph{first \kat space} of $X$.
\end{defi}
One can show that $E_1(X)$ does not depend on the chosen enumeration of
$S^{\fin}(X)$.
Moreover, the space $E_1(X)$ is again a
$\Komega$-structure, whence we may iterate the procedure and
thereby construct inductively the $n$-th \kat spaces $E_n(X)$ of $X$ as follows:
\begin{eqnarray*}
E_0(X)&:=&X,\\
 E_{n+1}(X)&:=&E_1(E_n(X)).
\end{eqnarray*}
This family of \kat spaces comes equipped with natural embeddings between its
members and therefore it forms an inductive system. The limit \demphmath{$E(X)$}
of that system will be called the \demph{\kat limit} of $X$. 

In order to prove Theorem \ref{mainthm}, it remains to show that the \kat limit
of an arbitrary
$\Komega(M)$-structure $X$ is isomorphic to $M$ and $\Aut(X)$ embeds 
continuously into
$\Aut(E_{1}(X))$. 
\begin{lem}\label{isomorphic}
 Assume $M$ to be an \SIR and consider an arbitrary $X\in\Komega(M)$. The \kat 
limit $E(X)$ is isomorphic to $M$.
\end{lem}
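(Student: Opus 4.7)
The strategy is to verify that $E(X)$ is a countable $\Komega$-saturated structure and invoke Fact \ref{Fraissethm}: since $M$ itself is $\Komega$-saturated (it is homogeneous with skeleton $\Komega$ by definition), uniqueness of countable $\Komega$-saturated structures forces $E(X)\cong M$. Countability and the inclusion $\mathrm{skel}(E(X))\subseteq\Komega$ come for free from the construction: each $E_n(X)$ is a countable SI-amalgam of countably many countable $\Komega$-structures, hence itself a countable $\Komega$-structure, and $E(X)=\bigcup_{n\in\omega}E_n(X)$ is still countable and embeddable into $M$.

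The substantive step is richness of $E(X)$ with respect to $\Komega$. I would fix a finitely generated $A\subseteq E(X)$ together with an embedding $f\colon A\to B$ into some $B\in\Komega$, and produce an embedding $h\colon B\to E(X)$ such that $h\circ f$ agrees with the inclusion $A\hookrightarrow E(X)$. Because $A$ is finitely generated and $E(X)$ is the direct limit of the chain $(E_n(X))_{n\in\omega}$, there exists $n$ with $A\subseteq E_n(X)$. The embedding $f$ equips $B$ with a qf-type $\pi(x)$ over $A$ which is realized in $M$. By Lemma \ref{extension}.i), $\pi$ extends uniquely to a finitely supported qf-type $\tilde{\pi}$ over $E_n(X)$ with support $A$. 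By the very definition of the first \kat space, $E_{n+1}(X)=E_1(E_n(X))$ is an SI-amalgam built over $E_n(X)$ from representatives of all finitely supported qf-types over $E_n(X)$, so it contains a realization of $\tilde{\pi}$; this realization yields the desired $h\colon B\to E_{n+1}(X)\subseteq E(X)$.

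Finally I would verify $\mathrm{skel}(E(X))=\Komega$. Given any $B\in\Komega$, pick any finitely generated $A\subseteq E(X)$; since the skeleton of $M$ is a \Fraisse class, JEP supplies some $D\in\mathrm{skel}(M)$ into which both $A$ and $B$ embed, and applying the richness just proved to the embedding $A\hookrightarrow D$ yields $D\hookrightarrow E(X)$, hence $B\hookrightarrow E(X)$. The only genuine content lies in the middle paragraph—the combination of Lemma \ref{extension}.i) with the defining property of $E_1$ that it realizes every finitely supported qf-type over its base; the rest is essentially bookkeeping on the inductive construction of $E(X)$.
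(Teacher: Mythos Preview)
Your argument is correct and follows the same strategy as the paper: show that $E(X)$ is countable and $\mathcal{K}$-saturated (where $\mathcal{K}$ is the skeleton of $M$), then invoke the uniqueness clause of Fact~\ref{Fraissethm}. The core step---embedding a one-point extension of $A\subseteq E_n(X)$ into $E_{n+1}(X)$ by extending its qf-type to a finitely supported type via Lemma~\ref{extension}.i)---is exactly the paper's argument.

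Two small remarks. First, you consistently write $\Komega$ where you mean $\mathcal{K}$: in this paper $\Komega$ denotes the class of \emph{all} countable structures embeddable in $M$, whereas the skeleton $\mathcal{K}$ consists of the finitely generated ones. The statement ``$\mathrm{skel}(E(X))=\Komega$'' is literally false; you want $\mathrm{skel}(E(X))=\mathcal{K}$. Second, for the inclusion $\mathcal{K}\subseteq\mathrm{skel}(E(X))$ you go via JEP and the richness already established, whereas the paper argues more directly: for $A=\langle a\rangle\in\mathcal{K}$, extend $\tp^{qf}(a/\emptyset)$ to a finitely supported type over $X$ using Lemma~\ref{extension}.i), so a copy of $A$ already appears in $E_1(X)$. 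Both routes work; yours is a nice observation that richness alone (plus JEP) forces the skeleton to be all of $\mathcal{K}$.
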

\begin{proof}
 Let $\mathcal{K}$ be the skeleton of $M$. As $M$ is homogeneous,
the class $\mathcal{K}$ is a \Fraisse class and $M=\Fr(\mathcal{K})$ is
$\mathcal{K}$-saturated. Any two
countable $\mathcal{K}$-saturated structures are isomorphic, whence it suffices
to prove $\mathcal{K}$-saturation for $E(X)$ to establish the lemma.

We will first show that $\mathcal{K}$ is exactly the skeleton of $E(X)$. It is
easy to see that $\mathcal{K}$ is contained in the skeleton, as for any finitely
generated structure $A=\langle a \rangle$ of $M$, the type
$\tp^{qf}(a/\emptyset)$
determines completely $A$ and it can
be extended to a finitely supported type over $X$ by Lemma \ref{extension}.i).
Hence,
there is a copy of each structure from $\mathcal{K}$ inside the
first \kat space
$E_1(X)$, and thus also in the limit $E(X)$.

For the other direction, let $A\subset E(X)$ be an arbitrary finitely
generated substructure. Then there is some $n\in\omega$ such that
$A\subset E_n(X)$. Since all the \kat-spaces $E_n(X)$ are
$\Komega$-structures, it follows that $A\in\mathcal{K}$ by definition, so
$\mathcal{K}=\mathcal{K}(E(X))$ as
desired.

It remains to show that $E(X)$ is rich with respect to
$\mathcal{K}$. Consider some finitely generated substructure
$A\subset E(X)$ and a $\mathcal{K}$-structure $B$ with $f:A\hookrightarrow
B=\langle Ab\rangle$.
Again, the structure $A$ is contained in some $E_n(X)$ and we can extend
$\tp^{qf}(b/A)$ to a finitely supported type over $E_n(X)$. Since a
realization of this type occurs in $E_{n+1}(X)$, we can embed $B$ in $E(X)$
over $A$. 

Consequently, the countable \kat-limit of an arbitrary $\Komega$-structure $X$
is $\mathcal{K}$-saturated and hence isomorphic to $M$. 
\end{proof}
\begin{lem}\label{extension2}
 Let $M$ be an \SIR and $X\in\Komega$ arbitrary. Then
$\Aut(X)$ embeds continuously into $Aut(E_{1}(X))$. In particular, the group 
$\Aut(X)$ embeds continuously into 
$\Aut(E(X))$.
\end{lem}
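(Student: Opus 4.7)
The plan is to construct, for each $\alpha \in \Aut(X)$, an extension $\hat\alpha \in \Aut(E_1(X))$ and verify that $\Phi \colon \alpha \mapsto \hat\alpha$ is an injective continuous group homomorphism. Using the fixed enumeration $S^{\fin}(X) = \{p_i(\bar x_i) \mid i \in \omega\}$ underlying the construction of $E_1(X)$, I would first fix once and for all tuples $\bar a_i$ that realize $p_i$ (with the chosen variable ordering) and generate the corresponding $A_i \subset E_1(X)$. Since $\alpha$ maps any support of $p_i$ in $X$ to a support of $\alpha_{\ast} p_i$, Lemma~\ref{extension}.i) makes $\alpha_{\ast} p_i$ again finitely supported, giving a group homomorphism $\sigma \colon \Aut(X) \to \mathrm{Sym}(\omega)$ determined by $\alpha_{\ast} p_i = p_{\sigma_\alpha(i)}$.

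For each $i$ and $\alpha$, I would define $f_{i,\alpha} \colon \struc{A_i X} \to \struc{A_{\sigma_\alpha(i)} X}$ as the unique isomorphism extending $\alpha|_X$ and sending $\bar a_i$ to $\bar a_{\sigma_\alpha(i)}$; this is well-defined because $\bar a_i \models p_i$, $\bar a_{\sigma_\alpha(i)} \models p_{\sigma_\alpha(i)} = \alpha_{\ast} p_i$, and $\struc{A_i X}$ is generated by $\bar a_i \cup X$. Since all the $f_{i,\alpha}$ restrict to $\alpha$ on $X$, Lemma~\ref{lemamalgamationofisos} yields an automorphism $\hat\alpha \in \Aut(E_1(X))$, equal to the union $\bigcup_i f_{i,\alpha}$.

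The main obstacle is verifying that $\Phi$ is a group homomorphism, i.e., the cocycle identity $f_{\sigma_\beta(i),\alpha} \circ f_{i,\beta} = f_{i,\alpha\beta}$ for all $i$, $\alpha$, $\beta$. The up-front choice of tuples $\bar a_i$ makes this step essentially automatic: both sides are isomorphisms $\struc{A_i X} \to \struc{A_{\sigma_{\alpha\beta}(i)} X}$ that restrict to $\alpha\beta$ on $X$ and send $\bar a_i$ to $\bar a_{\sigma_{\alpha\beta}(i)}$, and since $\struc{A_i X}$ is generated by $\bar a_i \cup X$, they coincide. Injectivity of $\Phi$ is immediate since $\hat\alpha|_X = \alpha$.

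For continuity, consider a basic neighborhood of the identity given by the pointwise stabilizer of a finite $F \subset E_1(X)$. Then $F$ lies in some $\struc{A_{i_1}\ldots A_{i_k} X_0}$ with $X_0 \finseteq X$; choosing a common finite support $C \subseteq X$ for the structures $\struc{A_{i_j} X}$, any $\alpha \in \Aut(X)$ fixing $X_0 \cup C$ pointwise satisfies $\sigma_\alpha(i_j) = i_j$ and, by the canonical prescription, $f_{i_j,\alpha}$ fixes $\bar a_{i_j}$ and $C$ pointwise, hence fixes the generated structure $\struc{A_{i_j} C}$ pointwise; consequently $\hat\alpha$ fixes $F$. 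The embedding $\Aut(X) \hookrightarrow \Aut(E(X))$ follows by iterating the construction along $E_1(X) \hookrightarrow E_2(X) \hookrightarrow \cdots$, noting that the extensions at consecutive levels are compatible, and passing to the direct limit.
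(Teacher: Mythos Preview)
Your argument is correct and follows essentially the same route as the paper: both use Lemma~\ref{lemamalgamationofisos} to build $\hat\alpha$ from the permutation of $S^{\fin}(X)$ induced by $\alpha$, verify the homomorphism property via uniqueness of the realizing structures, and prove continuity by showing that fixing a common support $C$ of the relevant $A_{i_j}$ forces $\hat\alpha$ to fix the chosen finite set. Your explicit up-front choice of generating tuples $\bar a_i$ is a mild sharpening that makes the cocycle identity transparent, and checking continuity at the identity rather than at an arbitrary $\hat f$ is an equivalent formulation for group homomorphisms; neither constitutes a genuinely different strategy.
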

\begin{proof}
Consider an
automorphism $f\in \Aut(X)$ and observe that $f$ induces a
permutation $\sigma_f$ of $S^{\fin}(X)=(\struc{A_iX}\mid i\in\omega)$, the space
of all finitely supported qf-types
over $X$.
By Lemma
\ref{lemamalgamationofisos}, this gives rise to an automorphism of the amalgam
$\amalgamo{i\in \omega}A_i=E_1(X)$. Hence, for every $f\in\Aut(X)$, there is an
automorphism
$\hat{f}\in \Aut(E_1(X))$ which extends $f$. As for every finitely
supported type $\pi(x)$ over $X$
there exists a unique $k\in\omega$ with $A_k\models \pi(x)$, these extensions
behave
well under multiplication, i.e. $\sigma_g\circ (\sigma_f)^{-1}=\sigma_{g\circ
(f^{-1})}$. Therefore, the set $\{\hat{f}\mid f\in\Aut(X)\}$
forms a subgroup of $\Aut(E_1(X))$. Denote by $\iota$ the map that sends 
$f\in\Aut(X)$ to $\hat{f}\in\Aut(E_1(X))$. If we identify
the structures
coming from $S^{\fin}(X)$ in $E_1(X)$ again with $\{\langle A_iX\rangle\mid
i\in\omega\}$, the
isomorphic copy of $\Aut(X)$ inside $\Aut(E_1(X))$ consists of the
subgroup of all $f\in\Aut(E_1(X))$ such that $f|_{X}$ is
in $\Aut(X)$ and $f$ induces a
permutation on $\{A_i\mid i\in\omega\}$. 

It remains to show that $\iota:\Aut(X)\rightarrow 
\Aut(E_1(X))$ is a continuous embedding: Let $\hat{f}\in\im(\iota)$ 
be an automorphism of $E_1(X)$. For an 
arbitrary finite subset $a\subseteq E_1(X)$, let $u:=\hat{f}|_a: 
a\rightarrow E_1(X)$ be the restriction of $\hat{f}$ to $a$ and 
$\mathcal{O}_u:=\{g\in\Aut(E_1(X))\mid g|_{a}=u\}$ the basic open set defined 
by $u$ containing $\hat{f}$. We have to show that the preimage of 
$\mathcal{O}_u$ under $\iota$ contains again an open subset.
As $a$ is finite, we can 
choose $A_{i_1},\dots, A_{i_n}$ from above and $C_0\subseteq X$ such that $a$ 
is definable over $C_0\cup\bigcup_{j=1,\dots,n}A_{i_j}$. 
Since the $A_{i_j}$ correspond to finitely supported extensions of $X$, for 
each 
$j=1,\dots, n$ there exists a $C_j\subseteq X$ with $A_{i_j}\ind_{C_j}X$. Set 
by $C:=\bigcup_{i\leq n}C_i$ and $v:=\hat{f}|_C$ the restriction of $\hat{f}$ 
to $C$.
We claim that $\mathcal{O}_v:=\{g\in\Aut(X)\mid g|_{C}=v\} \subseteq 
\Aut(X)$ is contained in the preimage of $\mathcal{O}_u$ under $\iota$. Let 
$g\in\mathcal{O}_v$ be an arbitrary automorphism of $X$ that extends $v$ and 
$\hat{g}$ its extension to $E_1(X)$. As by assumption 
$A_{i_j}\ind_CX$ and $\hat{g}(C)=\hat{f}(C)$, Invariance implies
\begin{equation}
 \hat{g}(A_{i_j})\ind_{\hat{f}(C)}X\text{ and 
}\hat{f}(A_{i_j})\ind_{\hat{f}(C)}X.
\end{equation}
Thus, Stationarity yields $\hat{g}(A_{i_j})\equiv_X\hat{f}(A_{i_j})$. As both 
$\hat{f}$ and $\hat{g}$ are in the image of $\iota$, the images of $A_{i_j}$ 
under each of the two maps is again one of the $A_k$. On the other hand, every 
finitely supported extension of $X$ has only been realized once within the 
$A_k$'s, whence $\hat{g}(A_{i_j})=\hat{f}(A_{i_j})$ for all $j=1,\dots, n$. In 
particular, we get $\hat{g}(a)=\hat{f}(a)$ and $\hat{g}\in\mathcal{O}_u$. This 
proves 
that the embedding $\iota:\Aut(X)\rightarrow \Aut(E_1(X))$ is continuous.
\end{proof}

With Lemmas \ref{isomorphic} and \ref{extension2} at hand, the main theorem
now follows easily.
\begin{thm}[Main Theorem] \label{mainthm}
Let $M$ be a countable \SIR and $\Komega$ be the class of all
countable structures
embeddable into
$M$. Then the automorphism group $\Aut(M)$ is universal for the class
$\Aut(\Komega):=\{\Aut(X)\mid X\in\Komega\}$, i.e. every group in
$\Aut(\Komega)$ can be continuously embedded as a subgroup into $\Aut(M)$.  
\end{thm}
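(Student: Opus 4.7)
The plan is to combine Lemma \ref{isomorphic} and Lemma \ref{extension2} directly. Given any $X\in\Komega$, I first apply Lemma \ref{extension2} to obtain a continuous embedding $\iota_{\infty}\colon\Aut(X)\hookrightarrow \Aut(E(X))$, built by iterating the lift $\iota\colon\Aut(X)\to \Aut(E_1(X))$ up the chain $E_0(X)\subseteq E_1(X)\subseteq E_2(X)\subseteq\cdots$ and taking the union in the limit. Continuity at each stage is given by Lemma \ref{extension2}, and continuity of the union follows because a basic open set in $\Aut(E(X))$ is determined by a finite tuple $a\subseteq E(X)$, which lies in some $E_n(X)$, so pulling back through the $n$-fold extension $\Aut(X)\to\Aut(E_1(X))\to\cdots\to\Aut(E_n(X))$ already produces an open preimage.

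Next, by Lemma \ref{isomorphic} there is an isomorphism $\Phi\colon E(X)\xrightarrow{\;\sim\;} M$ of $L$-structures. Conjugation by $\Phi$ induces a topological group isomorphism $\Phi_{*}\colon\Aut(E(X))\to\Aut(M)$, $g\mapsto \Phi\circ g\circ\Phi^{-1}$, since any basic open set $\mathcal{O}_u$ in $\Aut(M)$ determined by a finite partial isomorphism $u\colon A\to M$ pulls back to $\mathcal{O}_{\Phi^{-1}\circ u\circ\Phi|_{\Phi^{-1}(A)}}$ in $\Aut(E(X))$, which is again basic open. Composing gives the continuous group embedding
\begin{equation*}
\Phi_{*}\circ\iota_{\infty}\colon\Aut(X)\hookrightarrow\Aut(M).
\end{equation*}

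Since $X\in\Komega$ was arbitrary, every element of $\Aut(\Komega)$ embeds continuously as a subgroup of $\Aut(M)$, which is the statement of the theorem. The only step that required real work was already carried out in Lemma \ref{extension2}; here the only mild obstacle is checking that continuity is preserved when passing from $\Aut(E_1(X))$ to the direct limit $\Aut(E(X))$, but as noted above this reduces to the finite-stage case because each finite subset of $E(X)$ is contained in some $E_n(X)$.
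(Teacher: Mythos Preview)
Your proposal is correct and follows exactly the same route as the paper: invoke Lemma~\ref{extension2} for the continuous embedding $\Aut(X)\hookrightarrow\Aut(E(X))$, then Lemma~\ref{isomorphic} for $E(X)\cong M$, and compose. The only difference is that you spell out two details the paper leaves implicit---namely, how continuity survives the passage to the direct limit $E(X)=\bigcup_n E_n(X)$ (which the statement of Lemma~\ref{extension2} already claims in its ``In particular'' clause) and why conjugation by the isomorphism $\Phi$ is a homeomorphism of automorphism groups---but these are welcome clarifications, not a different argument.
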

\begin{proof}
 Assume $X$ to be a $\Komega$-structure. Lemma \ref{extension2} yields that the
automorphism group of $X$ can be continuously embedded as a subgroup into the 
automorphism
group of its \kat limit $E(X)$. As this limit is isomorphic to $M$ by Lemma
\ref{isomorphic}, we conclude that $\Aut(X)$ can also be continuously
embedded as a
subgroup into $\Aut(M)$.
\end{proof}

\section{Generalized n-gons}\label{sec:ngon}

Several examples of \Fraisse structures with (local)
stationary independence were already mentioned in the introduction, amongst them
pure structures, relational \Fraisse
limits with free
amalgamation and the rational Urysohn space and sphere. In all these cases
the construction given above yields that their automorphism groups are universal
with respect to the class of automorphism groups of their countable
substructures.

Another \SIR in a relational language, yet without free amalgamation,
is the countable universal partial order $\mathcal{P}$. Given finite
partial orders $A$ and $B$ with a common substructure $C$, we define the amalgam
$A\amalgam_CB$ of $A$
and $B$ over $C$ as the structure consisting of the disjoint union
of $A$ and $B$
over $C$ such that $a\leq b$ (resp. $b\leq a$) if and only if there is some 
$c\in C$ with
$a\leq c\leq b$ (resp. $b\leq c\leq a$). It is easy to check that the relation
$A\ind_{C}B$ defined
by $\struc{ABC}\cong \samalgamo{A}{C}{B}$ provides a stationary independence
relation on $\mathcal{P}$. \\

Further, non relational examples have recently been provided
by Baudisch \cite{Baudisch2014}, who shows that graded Lie algebras
over finite fields and $c$-nilpotent groups of exponent $p$ with an extra
predicate for a central Lazard series are \SIRs, whence
their automorphism group is universal. \\

In the following, we will exhibit yet another homogeneous non
relational structure that admits a
stationary independence relation: The countable universal generalized $n$-gon
$\Gamma_n$, which arises
as the \Fraisse limit of certain bipartite graphs \cite{Tent2011}.

A graph $G=(V_G,E_G)$ is \demph{bipartite} if we can partition its 
vertex set $V_G=V_1\dot\cup V_2$ in such a way that every vertex in $V_1$ is 
only 
connected to vertices in $V_2$ and vice versa. We equip graphs with the 
\demph{graph metric} $d^G$, where $d^G(x,y)$ is the length of the shortest path 
from $x$ to $y$ in $G$. Furthermore, the \demph{diameter} of $G$ is the 
smallest number 
$n\in\N$ such that the distance between every two vertices $x$ and $y$ is at 
most $n$. If no such number exists, we say that $G$ is of infinite diameter. 
Finally, the \demph{girth} of $G$ is the length of a shortest cycle in $G$. By 
a \demph{subgraph} $H\subseteq G$ we mean an induced subgraph.

\begin{defi}
 A \demph{generalized} $n$\demph{-gon} $\Gamma$ 
is a bipartite graph of
diameter $n$ and girth $2n$. 
\end{defi}

Generalized $n$-gons were introduced by Jaques Tits, who developed the 
theory of buildings. Note that an example of a generalized $3$-gon is given by 
a projective plane. The class of generalized $n$-gons coincides with the class 
of spherical buildings of rank $2$.

We consider generalized $n$-gons $\Gamma$ in the
language $L_n=\struc{P,f_k\mid k=0,\dots,n}$, where $P$ is a
predicate for the sort of the partition of the vertex set and the $f_k$'s are 
binary
functions with $f_k(x,y):=x_k$ if
$d^{\Gamma}(x,y)=l\geq k$ and there is a unique shortest path
$p=(x=x_0,\dots,x_k,\dots,x_l=y)$ from $x$ to $y$. 
Such a unique path always exists, if $l<n$, as otherwise there 
would be a non-trivial cycle of length $2l<2n$, 
contradicting the assumption on the girth of $\Gamma$. If there is no such 
unique path or $d^{\Gamma}(x,y)<k$, we set $f_k(x,y):=x$. 
Note that the edge
relation is definable within this language as two vertices $x$ and $y$ are
incident if and only
if $x\neq y$ and $f_1(x,y)=y$. Furthermore, if $\Delta\subseteq \Gamma$ is 
a generalized $n$-gon contained in $\Gamma$, then $\Delta$
is generated by a subgraph $A\subseteq \Delta$ as an $L_n$-structure, if and 
only if $\Delta$ is the smallest generalized $n$-gon in $\Gamma$ containing 
$A$. 

Given any connected bipartite graph $G$ without cycles of length less than $2n$,
we can build an $L_n$-structure inductively as follows: Set 
$\mathcal{F}_0(G):=G$.
Assume the graph $\mathcal{F}_i(G)$ has already been constructed. For any
pair $(x,y)$ in $\mathcal{F}_i(G)$ with distance $n+1$ in $\mathcal{F}_i(G)$, 
we add a \demph{new path} $p_{x,y}=(x=x_0,x_1,\dots,x_{n-2},x_{n-1}=y)$ 
from $x$ to $y$ of length $n-1$, i.e. a path from $x$ to $y$ of length $n-1$ 
such that all the $x_i$ for $i=1,\dots, n-2$ are new vertices. 
Clearly, the graph $\mathcal{F}_i(G)\cup p_{x,y}$ still does not have cycles of 
length less than $2n$,  as every such cycle would have to contain the whole 
path $p_{x,y}$, but if we could complete $p_{x,y}$ to a cycle of length less 
than $2n$, there existed a path of length at most $n$ between $x$ and $y$ 
in 
$\mathcal{F}_i(G)$, contradicting $d^{\mathcal{F}_i(G)}(x,y)=n+1$. Now we define
\begin{equation*}
 \mathcal{F}_{i+1}(G):=\mathcal{F}_i(G)\cup\{p_{x,y}\mid x,y\in 
\mathcal{F}_i(G) \text{ and }d^{\mathcal{F}_i(G)}(x,y)=n+1\}
\end{equation*}
and call the graph
$\mathcal{F}(G):=\bigcup_{i\in\omega}\mathcal{F}_i(G)$ the \demph{free
\demphmath{$n$}-completion} of $G$. Observe that
$\mathcal{F}(G)$ is a generalized $n$-gon.  

We now consider the class $\mathcal{C}_n$ of all free
$n$-completions of finite connected bipartite graphs without cycles of length 
less than $2n$ and their $L_n$-substructures. 

Tent shows in \cite{Tent2011}
that $\mathcal{C}_n$ is a \Fraisse class and hence it admits a countable
homogeneous limit $\Gamma_n$. She also provides a
characterization to recognize free $n$-completions of finite graphs via some
corresponding weighted Euler characteristic. We will lift that characterization 
to infinite
graphs and use it to define an independence relation on $\Gamma_n$. 

\begin{defi}
  Consider the function $\chi_n$ on finite graphs $H$ with vertex set
$V_H$ and edge set $E_H$ defined by $\chi_n(H):=(n-1)|V_H|-(n-2)|E_H|$. For 
arbitrary, possibly infinite graphs $X\subseteq Y$, we say that $X$ is 
\demph{n-strong} in $Y$
(write $X\leq_n Y$) if and only if for all
finite $H\subseteq Y$ we have
$\chi_n(H/H\cap X ):=\chi_n(H)-\chi_n(H\cap X)\geq 0$.
\end{defi}

\begin{lem}\label{infinitedelta}
Assume $X\subseteq Y$ to be graphs. If $Y$ arises from $X$ by successively 
patching new paths of
length $n-1$, then the following hold:
\begin{itemize}
 \item [i)] $ X\leq_n Y$ and
 \item [ii)] if $X\subseteq Y\subseteq\Delta$ for some graph $\Delta$ with 
$X\leq_n\Delta$, then
$Y\leq_n\Delta$.
\end{itemize}
\end{lem}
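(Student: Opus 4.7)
My plan is to induct on the number of patched paths used to build $Y$ from $X$. Write $Y_0 = X$ and $Y_{i+1} = Y_i \cup p_{i+1}$, where $p_{i+1}$ is a new path of length $n-1$ with $n-2$ fresh internal vertices inserted between two vertices of $Y_i$. The guiding observation throughout is that a patched path, considered over a base that already contains its two endpoints, contributes $(n-1)(n-2) - (n-2)(n-1) = 0$ to $\chi_n$; so attaching patched paths is $\chi_n$-neutral in the free sense.

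For part (i), given a finite $H \subseteq Y_{i+1}$, let $H' := H \cap Y_i$ and write $k$ for the number of internal vertices of $p_{i+1}$ in $H$ and $e$ for the number of edges of $p_{i+1}$ in $H$. Then $\chi_n(H) - \chi_n(H') = (n-1)k - (n-2)e$. To see that this is nonnegative, note that the vertices of $H$ on the path form a subset $T$ of the $n$ vertices of $p_{i+1}$, so $e = |T| - c$ where $c$ is the number of connected components of $T$; a short case analysis on $t := |\{x,y\} \cap V_H| \in \{0,1,2\}$ shows that $(n-1)k - (n-2)e = k + (n-2)(c - t) \geq 0$, with equality only when the entire path lies in $H$. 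Combined with the inductive hypothesis $\chi_n(H') \geq \chi_n(H \cap X)$, this yields $X \leq_n Y_{i+1}$.

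For part (ii), the obstacle is that applying $X \leq_n \Delta$ directly to a finite $H \subseteq \Delta$ only gives $\chi_n(H) \geq \chi_n(H \cap X)$, whereas we need $\chi_n(H) \geq \chi_n(H \cap Y)$, and in general $\chi_n(H \cap Y) > \chi_n(H \cap X)$. To bridge the gap I enlarge $H$ inside $\Delta$ to a finite $H^\# \supseteq H$ by the following closure: whenever an internal vertex of a patched path of $Y$ belongs to the current set, adjoin all vertices of that entire path. This terminates after finitely many iterations because the endpoints of each patched path come from strictly earlier stages of the construction, so iterating produces a finite tree of ancestor paths rooted at the finite set $H$ and bottoming out in $X$.

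Two properties of $H^\#$ then close the argument. First, $\chi_n(H^\# \cap Y) = \chi_n(H^\# \cap X)$: building $H^\# \cap Y$ from $H^\# \cap X$ by adjoining the included paths in construction order, each addition contributes $(n-1)(n-2) - (n-2)(n-1) = 0$, since both endpoints of every newly attached path are already present (either in $H^\# \cap X$ or as internal vertices of previously attached included paths). Second, since $Y$ is an induced subgraph of $\Delta$ and every vertex of $H^\# \setminus H$ lies in $V_Y$, any $\Delta$-edge from such a new vertex to another vertex of $V_Y$ is automatically a $Y$-edge; the only extra $\Delta$-edges must land in $V_H \setminus V_Y$. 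This forces $\chi_n(H^\#) - \chi_n(H) \leq \chi_n(H^\# \cap Y) - \chi_n(H \cap Y)$. Applying $X \leq_n \Delta$ to $H^\#$ gives $\chi_n(H^\#) \geq \chi_n(H^\# \cap X) = \chi_n(H^\# \cap Y)$, and rearranging these two facts yields $\chi_n(H) \geq \chi_n(H \cap Y)$. The main obstacle is identifying the correct closure $H^\#$; once it is in place, the rest is routine bookkeeping.
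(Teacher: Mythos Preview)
Your argument is correct and follows the same strategy as the paper: enlarge $H$ to a finite superset containing complete patched paths and exploit that a complete path over its two endpoints contributes $(n-1)(n-2)-(n-2)(n-1)=0$ to $\chi_n$. The only organisational difference is that the paper first reduces to the case where $Y=X\cup p_{x,y}$ is obtained by attaching a single path, so the enlargement is simply $H':=H\cup p_{x,y}$ and no iterated closure or termination argument is needed; your all-at-once closure $H^{\#}$ is a legitimate alternative, but the single-path reduction is a bit cleaner.
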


\begin{proof}
It suffices to consider the case where $Y$ arises from $X$ by adding one new
path $p_{x,y}=(x=x_0,x_1,\dots,x_{n-2},x_{n-1}=y)$ of length $n-1$ to two 
vertices $x$ and $y$ of $X$, i.e. $Y:=X\cup p_{x,y}$ and $x_i\not\in X$ for 
$i=1,\dots, n-2$.

$i)$ We have to show that 
\begin{equation}\label{chifunction}
 \chi_n(H/H\cap X)=(n-1)(|V_{H}|-|V_{H\cap X}|)-(n-2)(|E_{H}|-|E_{H\cap 
X}|)\geq 0
\end{equation}
 for any finite subgraph $H\subseteq Y$. Clearly, the inequality 
(\ref{chifunction}) holds, if $|V_{H}|-|V_{H\cap X}|\geq |E_{H}|-|E_{H\cap 
X}|$, i.e. if there are more vertices in $H$ outside of $X$ than there are 
edges in $H$ outside of $X$. Thus, as $Y=X\cup p_{x,y}$, the inequality 
(\ref{chifunction}) could only fail, if $H$ contained the entire path 
$p_{x,y}$. But then, 
\begin{eqnarray*}
 \chi_n(H/H\cap X)&=&(n-1)(|V_{H}|-|V_{H\cap X}|)-(n-2)(|E_{H}|-|E_{H\cap 
X}|)\\
&=&(n-1)(|V_{p_{x,y}}|-|\{x,y\}|)-(n-2)|E_{p_{x,y}}|\\
&=&(n-1)(n-2)-(n-2)(n-1)\\
&=&0.
\end{eqnarray*}
Hence, we get $ X\leq_n Y$, as desired.

$ii)$ Consider $H\subseteq \Delta$ finite. We have to show that
$\chi_n(H/H\cap Y)\geq 0$. Let $H'$ be the smallest subgraph of $\Delta$ 
that contains $H$ and the path $p_{x,y}$. As $Y=X\cup p_{x,y}$, all vertices in 
$H'\setminus H$ are also in $H'\cap Y$. This yields 
\begin{eqnarray*}
 \chi_n(H/H\cap Y)&=&(n-1)(|V_H|-|V_{H\cap Y}|)-(n-2)(|E_H|-|E_{H\cap 
Y}|)\\
&=& (n-1)(|V_{H'}|-|V_{H'\cap Y}|)-(n-2)(|E_{H}|-|E_{H\cap 
Y}|)\\
&\geq& (n-1)(|V_{H'}|-|V_{H'\cap Y}|)-(n-2)(|E_{H'}|-|E_{H'\cap 
Y}|)\\
&=&\chi_n(H'/H'\cap Y).
\end{eqnarray*}

One calculates as above that $\chi_n(H'\cap Y)=\chi_n(H'\cap
X)$, because $H'\cap Y= (H'\cap X)\cup p_{x,y}$. This yields
\begin{equation*}
 \chi_n(H/H\cap Y)\geq \chi_n(H'/H'\cap Y)=\chi_n(H'/H'\cap X)\geq 0.
\end{equation*}

\end{proof}

With Lemma \ref{infinitedelta} at hand, we can now prove the following 
characterization of free $n$-completions. The proof follows the proof of 
Proposition 2.5 from \cite{Tent2011}, we included for completeness. 

\begin{lem}\label{equivalentdefis}
For any $L$-structure
$\Delta\in\mathcal{C}_n$
generated by a subset $X\subset \Delta$, the following are equivalent:
\begin{itemize}
 \item [i)] $X$ is $n$-strong in $\Delta$;
 \item [ii)] $\Delta$ is the free $n$-completion of $X$.
\end{itemize}
\end{lem}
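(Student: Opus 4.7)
The direction $(ii)\Rightarrow(i)$ is a direct induction via Lemma~\ref{infinitedelta}(i). Since $\mathcal{F}(X)=\bigcup_{i\in\omega}\mathcal{F}_i(X)$ and each stage $\mathcal{F}_{i+1}(X)$ is obtained from $\mathcal{F}_i(X)$ by adjoining finitely many length-$(n-1)$ paths of the prescribed form, iterating Lemma~\ref{infinitedelta}(i) -- together with the transitivity $X\leq_n Y\leq_n Z \Rightarrow X\leq_n Z$, which follows from the additivity $\chi_n(H/H\cap X)=\chi_n(H/H\cap Y)+\chi_n(H\cap Y/H\cap X)$ whenever $X\subseteq Y$ -- yields $X\leq_n\mathcal{F}_k(X)$ for every $k$. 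Any finite $H\subseteq\mathcal{F}(X)$ lies in some $\mathcal{F}_k(X)$, so $\chi_n(H/H\cap X)\geq 0$, whence $X\leq_n\mathcal{F}(X)$.

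For $(i)\Rightarrow(ii)$ the plan is to reconstruct the free completion inside $\Delta$ stage by stage. Assuming $X\leq_n\Delta$, set $G_0:=X\subseteq\Delta$ and recursively let $G_{i+1}$ be $G_i$ together with, for each pair $(x,y)$ of $G_i$-vertices at $G_i$-distance exactly $n+1$, a fresh geodesic of length $n-1$ from $x$ to $y$ inside $\Delta$. The aim is then to verify at each stage that (a) such a geodesic genuinely exists in $\Delta$ with interior disjoint from $G_i$, and (b) the resulting chain $(G_i)$ exhausts $\Delta$; once this is done, $G_i\cong\mathcal{F}_i(X)$ canonically for every $i$ and taking the limit yields an isomorphism $\mathcal{F}(X)\xrightarrow{\sim}\Delta$ fixing $X$.

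The central input for (a) is the $n$-strong assumption, propagated via Lemma~\ref{infinitedelta}(ii) to $G_i\leq_n\Delta$. Given a pair $(x,y)$ at $G_i$-distance $n+1$ and any shortest $\Delta$-path $q$ between them of length $k\leq n$, form $H:=G_i\cup q$ and use two ingredients. First, the girth-$2n$ condition on $\Delta$ forces $k\geq n-1$, since otherwise concatenating $q$ with the $G_i$-geodesic would produce a cycle of length $<2n$. Second, the inequality $\chi_n(H/G_i)=(n-1)a-(n-2)b\geq 0$, with $a$ and $b$ the number of new vertices resp.\ new edges contributed by $q$ relative to $G_i$, forces the interior of $q$ to consist entirely of new vertices. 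The remaining possibility $k=n$ is ruled out by $\Delta$ being $L_n$-generated by $X$: a pair at distance exactly $n$ admits no unique shortest path, so the $f_j$ produce no new vertex from it, and matching this against the inductive construction shows that such a pair cannot arise as an active one. Part (b) then follows, since each element of $\Delta$ is produced from $X$ by iterated $f_j$-applications and each such application corresponds to picking an interior vertex of a length-$<n$ geodesic already realized at some stage of the $G_i$-construction.

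The main obstacle is the delicate interplay in the central claim: the $\chi_n$-inequality alone bounds $k$ only from below, while the $L_n$-generation hypothesis is a global condition and must be carefully coordinated with the inductive construction in order to exclude the boundary case $k=n$ and synchronize the two filtrations $(\mathcal{F}_i(X))$ and $(G_i)$ step by step.
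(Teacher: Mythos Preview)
Your overall strategy matches the paper's: inductively embed the stages $\mathcal{F}_k(X)$ (your $G_k$) inside $\Delta$, propagate $n$-strongness via Lemma~\ref{infinitedelta}(ii), and conclude equality from the hypothesis that $X$ generates $\Delta$. The direction $(ii)\Rightarrow(i)$ is handled the same way in both.

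There is, however, a genuine gap in your treatment of the case $k=n$. Your argument---that the $f_j$ produce no new vertex from a pair at $\Delta$-distance $n$, hence ``such a pair cannot arise as an active one''---does not yield a contradiction: you have fixed $x,y\in G_i$ with $d^{G_i}(x,y)=n+1$ and are trying to compute $d^{\Delta}(x,y)$; nothing you wrote excludes $d^{\Delta}(x,y)=n$, and if it were $n$ your construction of $G_{i+1}$ (which asks for a geodesic of length $n-1$) would simply fail. The clean fix is parity: since $\Delta$ is bipartite, $d^{\Delta}(x,y)$ and $d^{G_i}(x,y)=n+1$ have the same parity, so $d^{\Delta}(x,y)\neq n$. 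Together with the diameter bound this forces $d^{\Delta}(x,y)\leq n-1$, after which your girth argument (the paper instead reuses the $\chi_n$-inequality here, applied to a maximal subpath of the geodesic with endpoints in $\mathcal{F}_k(X)$ and interior outside) gives $d^{\Delta}(x,y)=n-1$ with all interior vertices new. The paper's sentence ``the conditions on diameter and girth imply that there is a unique path of length $n-1$'' is compressed, but this parity step is what underlies it.

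Two smaller points. Your $H:=G_i\cup q$ is in general infinite, so $\chi_n(H/G_i)$ is not literally defined; take $H:=q$ (or a subpath as above) and apply $G_i\leq_n\Delta$ directly. And part~(b) is simpler than you make it: once $\bigcup_i G_i$ is a generalized $n$-gon contained in $\Delta$ and containing $X$, it is automatically closed under all $f_j$, hence is an $L_n$-substructure, hence equals $\Delta$ because $X$ generates $\Delta$. There is no need to trace individual $f_j$-applications or to ``synchronize filtrations''.
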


\begin{proof}
 \underline{$\Rightarrow$:} Assume $X$ is $n$-strong in $\Delta$. As above, we 
will denote by $\mathcal{F}_k(X)$ the $k$-th step of the free $n$-completion of 
$X$. We show that $\mathcal{F}_k(X)\leq_n \Delta$ for all $k$, which implies 
$\mathcal{F}(X)=\Delta$, as $X$ generates $\Delta$. 

For $k=0$, the statement is given by the assumption, as $\mathcal{F}_0(X)=X$. 
Now assume that 
$\mathcal{F}_k(X)\leq_n \Delta$ and consider $x,y\in \mathcal{F}_k(X)$ 
arbitrary with $d^{\mathcal{F}_k(X)}(x,y)=n+1$. As $\Delta$ is in 
$\mathcal{C}_n$, the conditions on diameter and girth imply that there is a 
unique 
path $p_{x,y}=(x,x_1,\dots,x_{n-2},y)$ between $x$ and $y$ of length $n-1$ in 
$\Delta$. We claim that $x_i\not\in \mathcal{F}_k(X)$ for $i=1,\dots, n-2$, i.e. 
that $p_{x,y}$ is a new path. Choose $1\leq j_1+1<j_2\leq n-1$ such that 
$x_{j_1}, x_{j_2}\in\mathcal{F}_k(X)$ and $x_l\not\in  \mathcal{F}_k(X)$ for all 
$l$ with $j_1< l <j_2$. Let $H:=\{x_{j_1},x_{j_1+1},\dots, x_{j_2}\}$ be the 
path between $x_{j_1}$ and $x_{j_2}$. Because $\mathcal{F}_k(X)$ is 
$n$-strong in $\Delta$, we know that $\chi_n(H/H\cap\mathcal{F}_k(X))\geq 
0$. An easy calculation shows that $\chi_n(H/H\cap\mathcal{F}_k(X))\geq 0$ 
if and only if the path $H$ has at least length $n-1$, i.e. $j_1=0$ and 
$j_2=n-1$. This proves that $p_{x,y}$ indeed is a new path and thus 
$\mathcal{F}_{k+1}(X)\subseteq \Delta$. By Lemma \ref{infinitedelta}.ii), 
$\mathcal{F}_{k+1}(X)$ is even $n$-strong in $\Delta$, which concludes the 
induction.

\underline{$\Leftarrow$:} If $\Delta$ is the free $n$-completion of $X$, it 
arises from $X$ by successively patching new paths of length $n-1$. Thus, Lemma 
\ref{infinitedelta}.i) immediately implies that $X\leq_n \Delta$.
\end{proof}

We can now define a
stationary independence relation on $\Gamma_n$ which corresponds to free
amalgamation. For given graphs $A,B$ and $C$ with
$C\subseteq A,B$, we denote the free amalgam of $A$ and $B$ over $C$ by
$A\amalgam_{C}B$. 

\begin{defi}[Independence Relation on $\Gamma_n$] Let $\Gamma_n$ be the
countable universal homogeneous generalized $n$-gon as introduced above.
For finitely generated substructures $A,B$ and $C\subset \Gamma_n$, we
define $A$ and
$B$ to be
independent over $C$ (write $A\ind_{C}B$) if and only if the free amalgam
$\langle AC\rangle\amalgam_{C}\langle BC\rangle$ is $n$-strong in the
substructure
$\langle ABC\rangle \subseteq \Gamma_n$ generated by $A,B$ and $C$ in
$\Gamma_n$.
\label{ngonindependence}
\end{defi}

Lemma \ref{equivalentdefis} yields that $A\ind_{C}B$ if and only if the
substructure generated
by $ABC$ is exactly the free $n$-completion of $\langle
AC\rangle\amalgam_{C}\langle BC\rangle$. We will use both
characterizations to prove the following main lemma.

\begin{lem}\label{ngonSIR}
 The relation $\ind$ as stated in Definition \ref{ngonindependence} defines
a stationary independence relation on $\Gamma_n$.
\end{lem}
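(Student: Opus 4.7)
The plan is to verify the five SIR axioms one at a time, relying throughout on Lemma \ref{equivalentdefis}: it reformulates $A\ind_C B$ as saying that $\langle ABC\rangle$ is the free $n$-completion of the free amalgam $\langle AC\rangle\amalgam_C\langle BC\rangle$, equivalently that this free amalgam is $n$-strong in $\langle ABC\rangle$. Invariance is immediate since automorphisms of $\Gamma_n$ preserve graph structure and hence $\chi_n$, and Symmetry follows from the symmetry of the free amalgam in its two arguments.

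For Existence, given finitely generated $A,B,C$, form the bipartite graph $G:=\langle AC\rangle\amalgam_C\langle BC\rangle$; gluing two generalized $n$-gons freely over a common sub-$n$-gon cannot introduce a cycle of length below $2n$, so the free $n$-completion $\mathcal{F}(G)$ lies in $\mathcal{C}_n$. Richness of $\Gamma_n$ provides an embedding of $\mathcal{F}(G)$ into $\Gamma_n$ fixing $\langle BC\rangle$ pointwise, and the image $A'$ of $A$ then satisfies $A'\equiv_C A$ while $\langle A'BC\rangle=\mathcal{F}(G)$, so $A'\ind_C B$ by Lemma \ref{equivalentdefis}. Stationarity is equally clean: if $A\equiv_C A'$ and both $A\ind_C B$, $A'\ind_C B$, then $\langle ABC\rangle$ and $\langle A'BC\rangle$ are free $n$-completions of graphs that are isomorphic via a map fixing $\langle BC\rangle$ and sending $A$ to $A'$. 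Functoriality of the free $n$-completion produces an isomorphism $\langle ABC\rangle\to\langle A'BC\rangle$ with the same properties, and homogeneity of $\Gamma_n$ lifts this to an automorphism, giving $A\equiv_{\langle BC\rangle}A'$.

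The main obstacle is Monotonicity. From the hypothesis $A\ind_C\langle BD\rangle$, i.e.\ $\langle AC\rangle\amalgam_C\langle BDC\rangle\leq_n\langle ABDC\rangle$, one must derive both $A\ind_C B$ and $A\ind_{\langle BC\rangle}D$. The strategy is to combine the transitivity of $\leq_n$, which is a one-line consequence of the additive identity $\chi_n(H/H\cap X)=\chi_n(H/H\cap Y)+\chi_n((H\cap Y)/(H\cap X))$ applied to $X\subseteq Y\subseteq Z$, with Lemma \ref{infinitedelta}.ii), which promotes $n$-strongness to intermediate subgraphs sitting inside a free $n$-completion. For the first conclusion, insert $\langle AC\rangle\amalgam_C\langle BC\rangle$ into an $n$-strong chain climbing up to $\langle ABDC\rangle$ via $\langle ABC\rangle$, and then restrict the hypothesis along this chain. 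The second conclusion $A\ind_{\langle BC\rangle}D$ requires the more delicate step of identifying the amalgam $\langle ABC\rangle\amalgam_{\langle BC\rangle}\langle BCD\rangle$ as an $n$-strong subgraph of $\langle ABDC\rangle$; this reduces to a bookkeeping argument comparing it with $\langle AC\rangle\amalgam_C\langle BDC\rangle$ (which is $n$-strong in $\langle ABDC\rangle$ by hypothesis) and using that both graphs sit inside the big free $n$-completion. Once both conclusions are in hand, Transitivity is automatic from the Remark following Definition \ref{SIR}, completing the proof.
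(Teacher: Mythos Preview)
Your treatment of Invariance, Symmetry, Existence and Stationarity matches the paper's and is fine. The gap is in Monotonicity, and it is not cosmetic.

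From $\langle AC\rangle\amalgam_C\langle BCD\rangle\leq_n\langle ABCD\rangle$ you try to reach $\langle AC\rangle\amalgam_C\langle BC\rangle\leq_n\langle ABC\rangle$ via ``transitivity of $\leq_n$'' and an ``$n$-strong chain through $\langle ABC\rangle$''. But transitivity only lets you pass from two links of a chain to the composite; here the first link of your proposed chain is precisely the conclusion you are after, so the argument is circular. Nor can you get the first link for free: $\langle AC\rangle\amalgam_C\langle BC\rangle\leq_n\langle AC\rangle\amalgam_C\langle BCD\rangle$ would require $\langle BC\rangle\leq_n\langle BCD\rangle$, which is not assumed. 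What the paper actually uses is the elementary observation that $X\leq_n Y$ implies $X\cap U\leq_n Y\cap U$, applied with $U=\langle ABC\rangle$. For this to yield $A\ind_C B$ one must identify $(\langle AC\rangle\cup\langle BCD\rangle)\cap\langle ABC\rangle$ with $\langle AC\rangle\cup\langle BC\rangle$, which boils down to the intersection identity
\[
\langle ABC\rangle\cap\langle BCD\rangle=\langle BC\rangle.
\]
This identity is the real content of the Monotonicity proof and is not a bookkeeping triviality: the paper establishes it by an induction that reconstructs $\langle ABC\rangle$ as a union $\bigcup_k\Gamma_k$ inside the successive stages $\mathcal{F}_k$ of the free $n$-completion of $\langle AC\rangle\amalgam_C\langle BCD\rangle$, checking at each step that the newly attached paths of length $n-1$ meet $\langle BCD\rangle$ only in $\langle BC\rangle$.

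The same identity is also what makes your second conclusion go through: without $\langle ABC\rangle\cap\langle BCD\rangle=\langle BC\rangle$ the graph $\langle ABC\rangle\amalgam_{\langle BC\rangle}\langle BCD\rangle$ cannot even be realised as a subgraph of $\langle ABCD\rangle$, so there is nothing to which Lemma~\ref{infinitedelta}.ii) can be applied. Once the identity is in hand, your sketch for $A\ind_{\langle BC\rangle}D$ is essentially the paper's: the already-proved $A\ind_C B$ shows $\langle ABC\rangle$ arises from $\langle AC\rangle\amalgam_C\langle BC\rangle$ by patching paths, hence $\langle ABC\rangle\amalgam_{\langle BC\rangle}\langle BCD\rangle$ arises from $\langle AC\rangle\amalgam_C\langle BCD\rangle$ the same way, and Lemma~\ref{infinitedelta}.ii) finishes. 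But you must first supply the intersection identity; the phrase ``bookkeeping argument'' does not discharge it.
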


\begin{proof}
 \textit{Invariance} and \textit{Symmetry} are immediate and \textit{Existence}
follows since $\Gamma_n$ is rich with respect
to $\mathcal{C}_n$-structures.

\textit{Monotonicity:} If $A\ind_{C}\struc{BD}$,
we know that
\begin{equation}\label{eq:freeamalg}
 \struc{AC}\amalgam_{C}\struc{BCD}\leq_n \struc{ABCD}.
\end{equation}

We will first prove that this yields the equality
\begin{equation}\label{eq:intersec}
 \struc{ABC}\cap\struc{BCD}=\struc{BC}.
\end{equation}

Recall that for any subgraph $\Gamma'\subseteq \langle ABCD\rangle$ and points 
$x$ and $y$ in $\Gamma'$ with distance $d^{\Gamma'}(x,y)=n+1$, there is a 
unique path 
$p_{x,y}=(x=x_0,x_1,\dots,x_{n-1}=y)\subseteq \langle ABCD \rangle$ from $x$ to 
$y$ of length $n-1$, by the conditions on diameter and girth in generalized
$n$-gons. As before, we will denote by 
$\mathcal{F}_k:=\mathcal{F}_k(\struc{AC}\amalgam_C\struc{BCD})$ the $k$-th step 
of the free $n$-completion of $\struc{AC}\amalgam_C\struc{BCD}$. Inductively 
we define
\begin{eqnarray*}
 \Gamma_0&:=&\struc{AC}\amalgam_C\struc{BC}\text{ and }\\
 \Gamma_{k+1}&:=&\Gamma_k\cup\{p_{x,y}\mid x,y\in \Gamma_k, 
d^{\mathcal{F}_k}(x,y)=n+1\} .
\end{eqnarray*}
Note that $\Gamma_k\subseteq \struc{ABC}$ for each $k$ and 
$\bigcup_{k\in\omega}\Gamma_k$ is a generalized $n$-gon, whence
$\bigcup_{k\in\omega}\Gamma_k=\struc{ABC}$.
Thus, in order to prove (\ref{eq:intersec}), it suffices to show that 
$\Gamma_k\cap \struc{BCD}=\struc{BC}$ for all $k$. We furthermore need 
to prove $\Gamma_k\subseteq \mathcal{F}_k$, so that the calculation of the 
distance $d^{\mathcal{F}_k}(x,y)$ is well defined for arbitrary $x$ and $y$ in 
$\Gamma_k$.

Both statements are clear for $k=0$ by the inequality in (\ref{eq:freeamalg}). 
Now, 
assume that 
$\Gamma_k\subseteq \mathcal{F}_k$ and that 
$\Gamma_k\cap\struc{BCD}=\struc{BC}$. Consider $x$ and $y$ in 
$\Gamma_k$ with $d^{\mathcal{F}_k}(x,y)=n+1$. By the definition of a free 
$n$-completion, the unique path $p_{x,y}$ of length $n-1$ between $x$ and $y$ 
will be contained in $\mathcal{F}_{k+1}$, whence 
$\Gamma_{k+1}\subseteq \mathcal{F}_{k+1}$. 
As $\struc{BCD}\subset \mathcal{F}_{k}$ and $p_{x,y}\cap\mathcal{F}_k=\{x,y\}$, 
we have 
\begin{eqnarray*}
 p_{x,y}\cap\struc{BCD}&= &\{x,y\}\cap\struc{BCD}\\
 &\subseteq &\Gamma_k\cap\struc{BCD}\\
 &=&\struc{BC},
\end{eqnarray*}
and thus $\Gamma_{k+1}\cap\struc{BCD}=\struc{BC}$, as desired. 

Now, note that for arbitrary graphs $X$ and $Y$, whenever 
$X\leq_nY$, then $X\cap U\leq_nY\cap U$ for any
$U\subseteq \Gamma_k$. Thus, for
$U=\struc{ABC}$, the equations (\ref{eq:freeamalg}) and (\ref{eq:intersec}) 
from above imply
\begin{equation*} 
\struc{AC}\amalgam_{C}\struc{BC}\overset{(4)}{=}\left(\struc{AC}
\amalgam_{C}\struc{BCD} \right)
\cap\struc{ABC}\overset{(3)}{\leq_n}\struc{ABCD}
\cap\struc{ABC}=\struc{ABC},
\end{equation*}
so we got $A\ind_CB$, as desired.

It remains to show that $A\ind_{\struc{BC}}D$. Again by the equality in 
(\ref{eq:intersec}), we know that $\struc{ABC}\amalgam_{\struc{BC}}\struc{BCD}$ 
is
contained in $\struc{ABCD}$. Furthermore, the independence $A\ind_CB$ implies
that
$\struc{ABC}\amalgam_{\struc{BC}}\struc{BCD}$ arises from
$\struc{AC}\amalgam_{C}\struc{BCD}$ by successively patching new paths of length
$n-1$. As $\struc{AC}\amalgam_{C}\struc{BCD}\leq_n\struc{ABCD}$, Lemma 
\ref{infinitedelta}.ii) yields
$\struc{ABC}\amalgam_{\struc{BC}}\struc{BCD}\leq_n\struc{ABCD}$ and thus
$A\ind_{BC}D$ as desired.

\textit{Stationarity:} Assume $A_1,A_2,B$ and $C$ given such that $A_1$ and 
$A_2$
have
the same type over $C$ and are both independent from $B$
over $C$. Then in particular, the graphs $\struc{A_iC}$ and $\struc{BC}$ form a 
free
amalgam over $C$,
whence there is partial isomorphism
$\varphi:\samalgamo{A_1}{C}{B}\rightarrow \samalgamo{A_2}{C}{B}$ sending $A_1$
to
$A_2$ while fixing $\struc{BC}$. Clearly, the map $\varphi$ extends to the free 
completions $\tilde{\varphi}:\struc{A_1BC}\rightarrow \struc{A_2BC}$. As 
$\Gamma_n$ is 
homogeneous, the partial isomorphism $\tilde{\varphi}$ extends to an 
automorphism of 
$\Gamma_n$, which still fixes $\struc{BC}$ and maps $A_1$ to $A_2$. Thus, the 
substructures $A_1$ and $A_2$ have the same type over $\struc{BC}$. 
\end{proof}

\begin{cor}
 The automorphism group of the countable homogeneous universal generalized
$n$-gon $\Gamma_n$ is universal for the class of all automorphism groups of
generalized $n$-gons that are free over a finite subset.
\end{cor}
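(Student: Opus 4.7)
The plan is to recognize the class described in the corollary as a subclass of $\Komega(\Gamma_n)$ and then invoke Theorem \ref{mainthm}. Concretely, a generalized $n$-gon $\Delta$ that is free over a finite subset $X\subseteq\Delta$ means (unpacking the terminology) that $X$ generates $\Delta$ as an $L_n$-structure and $X\leq_n\Delta$. By Lemma \ref{equivalentdefis}, this is equivalent to $\Delta=\mathcal{F}(X)$, the free $n$-completion of $X$. Note that $X$ must be connected, since otherwise $\Delta$ would fail to have diameter $n$.

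First I would verify that $\mathcal{F}(X)$ is a countable $L_n$-structure. Since $X$ is finite, each inductive step $\mathcal{F}_{i+1}(X)$ adjoins only finitely many new paths (one per pair in $\mathcal{F}_i(X)$ at distance $n+1$), so every $\mathcal{F}_i(X)$ is finite and $\mathcal{F}(X)=\bigcup_{i\in\omega}\mathcal{F}_i(X)$ is countable.

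Next, I would argue that the skeleton of $\mathcal{F}(X)$ is contained in $\mathcal{C}_n$. By definition, $\mathcal{C}_n$ consists of free $n$-completions of finite connected bipartite graphs without cycles of length less than $2n$ together with their $L_n$-substructures, so every finitely generated $L_n$-substructure of $\mathcal{F}(X)$ is automatically a member of $\mathcal{C}_n$. Since $\Gamma_n$ is $\mathcal{C}_n$-saturated, a standard back-and-forth argument chaining the richness of $\Gamma_n$ along the ascending sequence $\mathcal{F}_0(X)\subseteq \mathcal{F}_1(X)\subseteq\dots$ produces an embedding $\mathcal{F}(X)\hookrightarrow\Gamma_n$. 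Thus $\mathcal{F}(X)\in\Komega(\Gamma_n)$.

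Finally, by Lemma \ref{ngonSIR} the pair $(\Gamma_n,\ind)$ is an \SIR, so Theorem \ref{mainthm} applies and gives that $\Aut(\Gamma_n)$ is universal for $\Aut(\Komega(\Gamma_n))$. In particular, for every generalized $n$-gon $\Delta=\mathcal{F}(X)$ that is free over a finite subset we obtain a continuous embedding $\Aut(\Delta)\hookrightarrow\Aut(\Gamma_n)$, which is exactly the claim of the corollary. There is no real obstacle here: once Lemmas \ref{equivalentdefis} and \ref{ngonSIR} are in hand, the corollary reduces to the skeleton observation above plus a direct appeal to the main theorem.
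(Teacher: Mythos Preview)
Your proposal is correct and is exactly the argument the paper intends: the corollary is stated without proof because, once Lemma~\ref{ngonSIR} establishes that $(\Gamma_n,\ind)$ is an \SIR, it follows immediately from Theorem~\ref{mainthm} after the routine check that an $n$-gon free over a finite set lies in $\Komega(\Gamma_n)$. One cosmetic remark: your chain $\mathcal{F}_0(X)\subseteq\mathcal{F}_1(X)\subseteq\dots$ is not the right one to feed into richness, since already $X=\mathcal{F}_0(X)$ generates all of $\mathcal{F}(X)$ as an $L_n$-structure; but this is harmless, as your skeleton observation together with the paper's identification of $\Komega$ with structures whose skeleton lies in $\mathcal{K}$ already gives $\mathcal{F}(X)\in\Komega(\Gamma_n)$ without any explicit embedding construction.
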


\bibliography{diplom2}
\end{document}